\newtheorem{thm}{Theorem}[section]
\newtheorem{lm}[thm]{Lemma}
\newtheorem{prop}[thm]{Proposition}
\newtheorem{cor}[thm]{Corollary}
\theoremstyle{definition}
\newtheorem{define}[thm]{Definition}
\newtheorem{cl}[thm]{Claim}
\newcommand{\cc}{\epsilon}
\newcommand{\FPdim}{\operatorname{FPdim}}
\newcommand{\Vect}{\operatorname{Vec}}
\newcommand{\Rep}{\operatorname{Rep}}
\newcommand{\CoH}{\operatorname{H}}
\newcommand{\End}{\operatorname{End}}
\newcommand{\CoZ}{\operatorname{Z}}
\newcommand{\inv}{\operatorname{Inv}}
\newcommand{\Aut}{\operatorname{Aut}}
\newcommand{\Ker}{\operatorname{Ker}}
\newcommand{\tr}{\operatorname{tr}}
\newcommand{\id}{\operatorname{Id}}
\newcommand{\im}{\operatorname{Im}}
\newcommand{\gal}{\operatorname{Gal}}
\newcommand{\pic}{\operatorname{Pic}}
\newcommand{\fun}{\operatorname{Fun}}
\newcommand{\cC}{\mathcal{C}}
\newcommand{\cD}{\mathcal{D}}
\newcommand{\ot}{\otimes}
\newcommand{\ZZ}{\mathbb{Z}}
\newcommand{\CC}{\mathbb{C}}
\newcommand{\omitted}[1]{}
\theoremstyle{remark}
\newtheorem*{rem}{Remark}
\title{On the classification of certain fusion categories}
\author{David Jordan, Eric Larson}
\date{}
\begin{document}

\maketitle

\begin{abstract}

We advance the classification of fusion categories in two directions.
Firstly, we completely classify integral fusion categories --- and
consequently, semi-simple Hopf algebras --- of dimension $pq^2$, where
$p$ and $q$ are distinct primes.  This case is especially interesting
because it is the simplest class of dimensions where not all integral
fusion categories are group-theoretical.  Secondly, we classify a
certain family of  $\ZZ/3\ZZ$-graded fusion categories, which are
generalizations of the $\ZZ/2\ZZ$-graded Tambara-Yamagami categories.
Our proofs are based on the recently developed theory of extensions of
fusion categories.  
\end{abstract}

\section{Introduction and results}

Recall that a fusion category is a semi-simple rigid monoidal category with
finitely many simple objects  $X_i$, with simple unit $X_0=\mathbf{1}$,
such that $\forall i$, $\End(X_i)=\CC$.  The goal of this
paper is to obtain two classification results for fusion categories, and
to apply one of them to the classification of semi-simple Hopf algebras.
We begin by explaining how the main results of the paper fit into the
existing literature.

Classification of fusion categories is an important and difficult problem.
More specifically, by the Ocneanu rigidity theorem (see \cite{ofc}),
there are finitely many fusion categories of a given dimension and, in
particular, finitely many fusion categories with a given fusion ring $R$
(called \emph{categorifications} of $R$).  This leads to the natural
problems of classifying all fusion categories of a given dimension and
categorifications of a given fusion ring.
In full generality these problems are very hard; for example, the first
problem includes the
classification of finite groups and Lie groups.  However, for certain
dimensions and certain fusion rings these problems are sometimes tractable
and lead to interesting results.  For example, there exist classifications
of fusion categories of dimension $p$, $p^2$ (see \cite{ofc}), $pq$
(\cite{pq}) or $pqr$ (\cite{ENO}), where $p, q, r$ are distinct primes.
In \cite{ty} categorifications of Tambara-Yamagami rings are classified.
In addition, there is a simple description of \emph{group-theoretical}
categories, i.e.\ categories that are Morita equivalent to a pointed
category, (see \cite{ofc}), and all categories whose dimension is a
prime power are of this type (see \cite{DGNO}).

In this paper we extend these classification results in two
directions. Our first result is the classification of integral fusion
categories of dimension $pq^2$, where $p$ and $q$ are distinct primes.
This case is interesting because it is the first class of dimensions
for which an integral fusion category need not be group-theoretical, and
significantly new methods are needed to get a classification.\footnote{%
All integral fusion categories of dimension $pqr$ classified in a recent
preprint \cite{ENO} are group-theoretical and thus the techniques of
\cite{ENO} do not work in our situation.} The second result is the
classification of categorifications of certain fusion rings $R_{p,G}$,
associated with a finite group $G$. These rings are generalizations
of the Tambara-Yamagami rings (which correspond to $p=2$). Namely, we
obtain a complete classification of categorifications of such rings
for $p=3$, when the order of the group $G$ is not divisible by $3$.
Our main results are Theorems \ref{pq2} and \ref{thmR3A} below.

\begin{thm} \label{pq2}
Let $p $ and $q$ be primes, and $\cC$ be an integral\footnote{If $p$ and
$q$ are odd, then the assumption that $\cC$ is integral is redundant.}
fusion category of Frobenius-Perron dimension $pq^2$.  Then, exactly
one of the following is true:
\begin{itemize}
\item $\cC$ is a group-theoretical category.
\item $p=2$, and $\cC$ is a Tambara-Yamagami category \cite{ty}
corresponding to an anisotropic quad\-ratic form\footnote{i.e, of the
form $x^2-ay^2$, where $a$ is a quadratic non-residue.} on $(\ZZ/q\ZZ)^2$;
there are two equivalence classes of such categories.
\item 
\sloppy
The prime $p$ is odd and divides $q + 1$, and $\cC$ is one of
the categories $\cC(p,q,\{\zeta_1,\zeta_2\},\xi)$ we shall explicitly
construct.  Here, $\zeta_1 \neq \zeta_2 \in\mathbb{F}_{q^2}$ are such
that $\zeta_1^p = \zeta_2^p = 1$, but $\zeta_1\zeta_2 \neq 1$, and 
$\xi \in \CoH^3(\ZZ/p\ZZ, \CC^*)\cong \ZZ/p\ZZ$.  There are $(p^2 - p)/2$
equivalence classes of such categories.
\end{itemize}
\end{thm}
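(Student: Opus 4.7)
The strategy is to analyze the universal grading $\cC = \bigoplus_{g \in U(\cC)} \cC_g$ of $\cC$, reduce to a faithful $\ZZ/p\ZZ$-graded extension of a pointed fusion category, and apply the Etingof--Nikshych--Ostrik classification of extensions.

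First I would perform a case analysis on the universal grading group $U(\cC)$, whose order divides $pq^2$. Using the facts that integral fusion categories of prime-power dimension are group-theoretical by \cite{DGNO} and that fusion categories of dimension $pq$ are classified in \cite{pq}, together with the constraint that Frobenius--Perron dimensions of simples are algebraic integers dividing $pq^2$, I aim to show that either $\cC$ is itself group-theoretical, or else $U(\cC) \cong \ZZ/p\ZZ$ with $\cC_e \cong \Vect_A$ pointed of dimension $q^2$. The Tambara--Yamagami case $p=2$ fits within this framework, with the anisotropy condition on the quadratic form emerging from the requirement that the extension be non-group-theoretical.

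In the remaining case I would invoke the ENO extension theory: faithful $\ZZ/p\ZZ$-graded extensions of $\Vect_A$ correspond to triples $(c, M, \xi)$, where $c : \ZZ/p\ZZ \to \pic(\Vect_A)$ is a homomorphism, $M$ is a compatible equivariant structure, and $\xi \in \CoH^3(\ZZ/p\ZZ, \CC^*)$, subject to vanishing of $\CoH^2$ and $\CoH^4$ obstructions. The Brauer--Picard group $\pic(\Vect_A)$ is the orthogonal group of $A \oplus \hat A$ with respect to the canonical hyperbolic form. Nontrivial actions of order $p$ in this group force $p \mid q+1$ and $A = (\ZZ/q\ZZ)^2$, and are then classified by the pair of eigenvalues $\zeta_1, \zeta_2 \in \mathbb{F}_{q^2}$ of the action on $A \otimes \mathbb{F}_{q^2}$, subject to $\zeta_1^p = \zeta_2^p = 1$, $\zeta_1 \zeta_2 \neq 1$ (orthogonality), and $\zeta_1 \neq \zeta_2$ (non-pointedness).

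The main obstacle will be tracking the equivalence classes carefully: equivalent extensions are related by conjugation by $\Aut(A)$, which induces the Frobenius/Galois identification on the unordered pair $\{\zeta_1, \zeta_2\}$, together with automorphisms of the grading group $\ZZ/p\ZZ$, which rescale the eigenvalue pair by $(\ZZ/p\ZZ)^\times$. Combined with a direct computation that the $\CoH^2$ and $\CoH^4$ obstructions vanish, this yields the stated $(p^2 - p)/2$ equivalence classes $\cC(p,q,\{\zeta_1, \zeta_2\},\xi)$. For $p=2$ the construction recovers the anisotropic Tambara--Yamagami categories of \cite{ty}, completing the trichotomy.
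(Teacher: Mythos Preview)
Your outline follows the paper's overall strategy---reduce to a $\ZZ/p\ZZ$-extension of $\Vect_A$ and invoke the ENO extension theory with $\pic(\Vect_A)\cong O(A\oplus A^*)$---but there are two substantive gaps.

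First, the reduction step. Analyzing the universal grading of $\cC$ directly is not enough: $U(\cC)$ could be trivial, and the tools you list (\cite{DGNO}, \cite{pq}) do not by themselves rule this out. The paper's Lemma~\ref{Zpgrading} instead uses the result from \cite{ENO} that any fusion category of dimension $p^aq^b$ is solvable (in particular Morita equivalent to a nilpotent one), and then argues that if $\cC$ itself has no faithful $\ZZ/p\ZZ$-grading, solvability forces it to be a $\ZZ/p\ZZ$-equivariantization of a pointed category of dimension $q^2$, hence group-theoretical. You also omit the case $A\cong\ZZ/q^2\ZZ$; the paper disposes of it separately (Lemma~\ref{q2grouptheoretical}) by observing that $q\cdot(\ZZ/q^2\ZZ\oplus(\ZZ/q^2\ZZ)^*)$ is always an invariant Lagrangian, so such extensions are automatically group-theoretical.

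Second, and more seriously, you have the meaning of the conditions on $(\zeta_1,\zeta_2)$ wrong, and this hides the hardest part of the argument. The element $\rho(\epsilon)$ acts on the four-dimensional space $A\oplus A^*$ (there is no action on $A$ alone); after the normalizations of Theorem~\ref{deltazero} its eigenvalues over $\overline{\mathbb{F}}_q$ are $\zeta_1,\zeta_2,\zeta_1^{-1},\zeta_2^{-1}$, and such $\rho$ exist whenever $p\mid q^2-1$, not only when $p\mid q+1$. The condition $\zeta_1\zeta_2\neq 1$ is \emph{not} an orthogonality constraint---it is exactly the condition that the resulting category fails to be group-theoretical. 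Proving this is the core of the paper's argument: one shows that $\cC$ is group-theoretical if and only if $A\oplus A^*$ admits a $\rho$-invariant Lagrangian for the hyperbolic form, and then a case-by-case analysis of such Lagrangians (by the dimension of their projection to $A$) shows one exists precisely when $\lambda=\zeta_1\zeta_2\in\mathbb{F}_q$, i.e.\ when $\zeta_1\zeta_2=1$ (since for $p\mid q+1$ the only $p$-th root of unity in $\mathbb{F}_q$ is $1$). Without this Lagrangian computation you cannot separate the group-theoretical extensions from the new ones, and the trichotomy---as well as the count $(p^2-p)/2$---is unsupported.
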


\fussy

\begin{cor}\label{Hopf}
All semi-simple Hopf algebras of dimension $pq^2$ are group-theoretical.
\end{cor}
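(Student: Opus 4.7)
The plan is to apply Theorem~\ref{pq2} to $\cC := \Rep(H)$ and rule out the two non-group-theoretical cases using the existence of the forgetful functor $\Rep(H) \to \Vect$ as a fiber functor. Since $H$ is semi-simple, $\cC$ is an integral fusion category of Frobenius-Perron dimension $pq^2$, so one of the three alternatives of Theorem~\ref{pq2} must hold. If one shows that neither the Tambara-Yamagami family nor the family $\cC(p,q,\{\zeta_1,\zeta_2\},\xi)$ admits a fiber functor, then $\cC$ is forced into the group-theoretical case, and the corollary follows.

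For the Tambara-Yamagami case (which requires $p=2$), one appeals to the classification of module categories over $TY(A,\chi,\tau)$ due to Tambara: such a category admits a fiber functor if and only if $(A,\chi)$ possesses a Lagrangian subgroup, i.e.\ a subgroup $L \subset A$ with $|L|^2 = |A|$ on which $\chi$ restricts trivially. For the anisotropic quadratic form on $A = (\ZZ/q\ZZ)^2$, there is no nonzero isotropic vector, hence no Lagrangian subgroup of order $q$, so no fiber functor exists and no such category can equal $\Rep(H)$.

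For the categories $\cC(p,q,\{\zeta_1,\zeta_2\},\xi)$ with $p$ odd and $p \mid q+1$, one uses the explicit construction (developed later in the paper) as a $\ZZ/p\ZZ$-graded extension of a pointed part. The plan is to enumerate the indecomposable module categories of rank one directly from the extension data, and to verify that the parameters $\{\zeta_1,\zeta_2\}$ obstruct the existence of any such module category. These parameters play a role analogous to the anisotropic quadratic form in the TY case, encoding an arithmetic condition---essentially that $\zeta_1\zeta_2 \neq 1$ combined with $p \nmid q-1$---incompatible with the Lagrangian-type structure needed for a fiber functor; one also verifies that no choice of $\xi \in \CoH^3(\ZZ/p\ZZ, \CC^*)$ can resolve this obstruction.

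The main obstacle is this last step: while the TY case reduces to a classical fact about anisotropic forms over $\mathbb{F}_q$, the categories $\cC(p,q,\{\zeta_1,\zeta_2\},\xi)$ are newly constructed and one must work out their module category theory from scratch, using the general theory of graded extensions of fusion categories together with the specific features of the construction. This is significantly more delicate than the TY analysis because of the $\ZZ/p\ZZ$-grading and the additional cocycle $\xi$, but the underlying obstruction---nonexistence of an isotropic/Lagrangian structure over $\mathbb{F}_{q^2}$ compatible with the $\ZZ/p\ZZ$-action---should parallel the TY case closely.
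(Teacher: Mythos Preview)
Your approach is quite different from the paper's, and it leaves the decisive step unproved. The paper does \emph{not} invoke Theorem~\ref{pq2} at all; it gives a direct Hopf-algebraic argument that works uniformly, with no case analysis over the exceptional families. After reducing via Lemma~\ref{Zpgrading} to the case where $\cC=\Rep H$ carries a faithful $\ZZ/p\ZZ$-grading, the paper notes that this grading is implemented by a central group-like $c\in H$, which in turn induces a grading $H^*=\bigoplus_k H^*_k$. Passing to the Morita-equivalent category $\cD$ of $H^*_0$-bimodules in $\cC$, one has $|\cD_0|=q^2$, so $\cD_0$ is pointed. The key step is that each multiplication map $H^*_k\ot_{H^*_0}H^*_l\to H^*_{k+l}$ is an isomorphism; this is proved by showing that a partial comultiplication $H\to H/(c-\zeta^l)\ot H$ is injective and then invoking the Nichols--Zoeller freeness theorem. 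Consequently every $H^*_k$ is an invertible object of $\cD$, so every graded piece $\cD_k$ contains an invertible, and a divisibility count forces $\cD$ to be pointed. No knowledge of the structure of the non-group-theoretical categories is used.

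Your route, by contrast, attempts to show directly that each exceptional category admits no fiber functor. The Tambara--Yamagami step is standard and correct. But for the families $\cC(p,q,\{\zeta_1,\zeta_2\},\xi)$ you only assert that the obstruction ``should parallel the TY case closely''; you do not carry out the classification of rank-one module categories over these $\ZZ/p\ZZ$-extensions of $\Vect_A$. In your scheme that computation \emph{is} the corollary, so as written this is a genuine gap. Be careful, too, not to conflate two different Lagrangian conditions: the invariant-Lagrangian analysis in the proof of Theorem~\ref{pq2} detects group-theoreticity (existence of a $\ZZ/p\ZZ$-invariant Lagrangian in $A\oplus A^*$), whereas a fiber functor corresponds to a rank-one module category, which is governed by different data. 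The paper nowhere supplies the module-category classification you would need, and its Hopf-algebraic argument is designed precisely to avoid it.
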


\begin{rem} 
Another proof of Corollary \ref{Hopf}, based on different
methods, is given in \cite{ENO}.
\end{rem}

Our second theorem concerns categorifications of a certain fusion ring
$R_{p,G}$ attached to a finite group $G$ and a prime $p$.

\begin{define} Let $G$ be a finite group whose order is a
square\footnote{This assumption is unnecessary when $p = 1$ or $p = 2$.}
and let $p \in \mathbb{N}$. Then, the \emph{fusion ring} $R_{p, G}$ is
the ring generated by the group ring $\ZZ[G]$ and $X_1,\ldots,X_{p-1}$,
with relations
\begin{align*}
g\otimes X_i=X_i\otimes g=X_i,\quad X_i^* = X_{p-i} \\
X_i \otimes X_j = \begin{cases} 
\sqrt{|G|}X_{i+j} & \mbox{if $i + j \neq p$}, \\ 
\sum_{g \in G}{g} & \mbox{if $i + j = p$}. \end{cases}
\end{align*}
\end{define}

\begin{thm} \label{thmR3A}
Let $A$ be a finite group of order not divisible by $3$.  Then the
fusion ring $R_{3,A}$ admits categorifications if, and only if, $A$ is
abelian, of the form $A\cong \bigoplus_{i=1}^N (\ZZ/p_i^{n_i}\ZZ)^{a_i}$,
where $p_i$ are primes, pairs $(p_i,n_i)$ are distinct, and $a_i$ are
even integers.  In this case, there are 
$3\prod_i{\left(\frac{a_i}{2} + 1\right)}$
categorifications.
\end{thm}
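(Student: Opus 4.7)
\noindent\emph{Proof proposal for Theorem \ref{thmR3A}.} The ring $R_{3,A}$ carries a canonical $\ZZ/3\ZZ$-grading, placing $A$ in degree $0$ and $X_i$ in degree $i$, so every categorification $\cC$ inherits a $\ZZ/3\ZZ$-grading $\cC = \cC_0 \oplus \cC_1 \oplus \cC_2$. Since $3 \nmid |A|$, this grading is automatically faithful. The trivial component $\cC_0$ contains only simples of Frobenius--Perron dimension $1$, so it is pointed, and hence $\cC_0 \simeq \Vect_A^\omega$ for some group law on $A$ (a priori not identified with the one from the ring) and some $\omega \in \CoH^3(A, \CC^*)$; the nontrivial components $\cC_1, \cC_2$ are invertible $\cC_0$-bimodules each containing a unique simple object. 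The plan is to apply the Etingof--Nikshych--Ostrik theory of $G$-extensions of fusion categories.

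That theory parametrizes faithful $\ZZ/3\ZZ$-extensions of $\cC_0$ by a triple $(c, M, \alpha)$, where $c \colon \ZZ/3\ZZ \to \pic(\cC_0)$ is a group homomorphism, $M$ is cohomological data in $\CoH^2$ trivializing an obstruction class in $\CoH^3$, and $\alpha \in \CoH^3(\ZZ/3\ZZ, \CC^*) \cong \ZZ/3\ZZ$ is the associator twist. The fusion-ring requirement that each $\cC_i$ be simple as a bimodule translates into the condition that $c(1)$ be the class of an invertible bimodule with a unique simple object; call such a bimodule \emph{transitive}. So the task reduces to classifying order-dividing-$3$ transitive elements of $\pic(\Vect_A^\omega)$.

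The main technical step is therefore this classification. An indecomposable $\Vect_A^\omega$-bimodule is specified by a pair $(L, \psi)$, where $L \le A \times A$ is a subgroup and $\psi$ is a $2$-cochain on $L$ trivializing the restriction of $\omega \boxtimes \omega^{-1}$; its simples are indexed by the cosets $(A \times A)/L$. Transitivity thus forces $L = A \times A$, which by the K\"unneth decomposition of $\CoH^3(A \times A, \CC^*)$ forces $[\omega] = 0 \in \CoH^3(A, \CC^*)$. Invertibility of the bimodule together with the requirement that $c(1)$ have order dividing $3$ in $\pic(\Vect_A)$ then forces both that $A$ be abelian and that $A \cong B \oplus B$ for some abelian group $B$, i.e.\ each primary multiplicity $a_i$ is even. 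This yields the existence criterion in the theorem.

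Finally, I count. Because $|A|$ is coprime to $3$, the group cohomology $\CoH^{\ge 1}(\ZZ/3\ZZ, -)$ of the relevant invertible coefficient modules vanishes, so the obstruction and the $\CoH^2$-ambiguity are automatically trivial, leaving only the transitive bimodule $c(1)$ and the associator $\alpha$ to pick. The transitive bimodule data split by primary component, and in each factor $(\ZZ/p_i^{n_i}\ZZ)^{a_i}$ the $\Aut$-orbits of admissible Lagrangian decompositions are counted by $a_i/2 + 1$. Multiplying by the independent choice $\alpha \in \CoH^3(\ZZ/3\ZZ, \CC^*) \cong \ZZ/3\ZZ$, one recovers the predicted $3 \prod_i (a_i/2 + 1)$ categorifications. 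The main obstacle I anticipate is the transitive bimodule analysis: showing rigorously that order $3$ plus transitivity forces $A \cong B \oplus B$, and verifying the orbit count $a_i/2 + 1$ per primary component. Once these ingredients are in place, the rest of the proof is a direct application of the extension machinery.
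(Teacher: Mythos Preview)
Your outline shares the paper's scaffolding---the $\ZZ/3\ZZ$-grading, Lemma~\ref{veca} to force $\cC_0\simeq\Vect_A$ with $A$ abelian and trivial associator, and the extension machinery of \cite{ENOnew} with vanishing obstructions since $\gcd(3,|A|)=1$---but the heart of the argument diverges. You propose to analyse transitive invertible bimodules via the $(L,\psi)$ description of module categories over $\Vect_{A\times A}$; the paper instead invokes directly the identification $\pic(\Vect_A)\cong O(A\oplus A^*)$, so that $\rho:\ZZ/3\ZZ\to\pic(\Vect_A)$ is literally a $2\times 2$ block orthogonal matrix $\mathbf M=\rho(\cc)$ of order dividing~$3$, and ``transitive'' becomes ``the upper-right block $\beta:A^*\to A$ is an isomorphism''. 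A conjugation normal form (Theorem~\ref{deltazero}) then reduces everything to a single datum: an isomorphism $\gamma:A\to A^*$ with $\gamma^*\gamma^{-1}\gamma^*$ skew-symmetric (Lemma~\ref{VectA}). The existence criterion ``all $a_i$ even'' and the per-factor count $a_i/2+1$ then fall out of explicit linear algebra over $\ZZ/q^n\ZZ$: Lemma~\ref{q-gp} splits any such $\gamma$ into $\gamma$-orthogonal $C\oplus C$ blocks on each of which either $\gamma$ is skew or $(\gamma^{-1}\gamma^*)^2=\gamma^{-1}\gamma^*-\id$, and Lemmas~\ref{uniqueskew} and~\ref{uniquegamma} show each block type is unique up to isomorphism. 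The number $a_i/2+1$ is simply the number of ways to distribute the $a_i/2$ blocks between the two types---not an orbit count on a Lagrangian Grassmannian as your phrasing suggests.

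Your $(L,\psi)$ route is not wrong in principle---a transitive invertible $\Vect_A$-bimodule does amount to $L=A\times A$ with a suitably nondegenerate $\psi$, and unwinding the bimodule tensor product would eventually reproduce the orthogonal-group picture---but you would still have to extract the order-$3$ condition on $\psi$, which is precisely where the paper's matrix normal form earns its keep; without it your ``main obstacle'' remains the entire content of the theorem. Two further points your sketch omits. First, abelianness of $A$ and triviality of $\omega$ are not consequences of the order-$3$ condition; the paper gets both immediately from Lemma~\ref{veca} (a rank-one graded component gives a fiber functor on $\cC_0$), which is cleaner than a K\"unneth argument. Second, to pass from grading-equivalence classes to genuine equivalence classes you must check that the residual $\Aut(\ZZ/3\ZZ)$-action is trivial: the paper verifies separately that $-1$ acts trivially on $\CoH^3(\ZZ/3\ZZ,\CC^*)\cong(\ZZ/3\ZZ)^{\otimes(-2)}$ and that $\gamma\mapsto\gamma^*$ fixes each equivalence class of solutions. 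Without this the factor of $3$ in the count is unjustified.
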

The proofs of these theorems are based on the solvability 
of fusion categories of dimension $p^aq^b$ proved in \cite{ENO}, 
the new theory of extensions of fusion categories developed in 
\cite{ENOnew}, and some intricate linear algebra over finite fields.

The organization of this paper is as follows.  Section \ref{prelims}
contains a review of standard definitions, and also results from recent
literature which we will need.  In Section \ref{pq2sec}, we exhibit
non-trivial gradings on the fusion categories of study, and analyze these
gradings with the methods from \cite{ENOnew}.  Sections \ref{pfthmpq2}
and \ref{Hopfpf} present the proofs of Theorem \ref{pq2} and Corollary
\ref{Hopf}, respectively.  In Section \ref{R3A}, we focus on the case
of $\ZZ/3\ZZ$-graded extensions of $\Vect_A$.  In Section \ref{pfthmR3A}
we prove Theorem \ref{thmR3A}.

\begin{rem}
It is not difficult to extend Theorem \ref{pq2} to classify
non-integral categories of dimension $pq^2$.  This is because the
dimensions of all objects in such a category are necessarily square roots
of positive integers.  This forces a $\ZZ/2\ZZ$-grading on the category,
which means either $p$ or $q$ is $2$. Then a case by case analysis yields
a complete list.  We have not included these computations as they aren't
of particular interest.  
\end{rem}

\begin{rem}
It is also possible to extend Lemmas \ref{VectA} and
\ref{q-gp}, and thus Theorem \ref{thmR3A} to the situation where $A$
contains no elements of order $9$ (i.e.\ the $3$-component of $A$ is a
vector space over $\mathbb{F}_3$).  However, it seems that our methods
break down if $A$ has a more complicated $3$-component.
\end{rem}

\begin{rem}Some possible directions of future research would be a
generalization of Theorem \ref{pq2} to categories of dimension $pq^n$,
$n\geq 3$, and of Theorem \ref{thmR3A} to $p>3$.  These problems reduce
to describing orbits of actions of certain reductive subgroups of
$O(2n,\mathbb{F}_q)$ on the Lagrangian Grassmanian.  While in general
these problems may be intractable, we think that under reasonable
simplifying assumptions one can get manageable and interesting
classifications.\end{rem}

\subsection*{Acknowledgements} The authors would like to warmly thank Pavel
Etingof and Victor Ostrik for posing the problem, and for many helpful
conversations as the work progressed.  We are grateful to Victor Ostrik
for explaining to us how Corollary \ref{Hopf} could be easily derived
from our results.  The work of both authors was supported by the Research
Science Institute, and conducted in the Department of Mathematics at MIT.

\section{Preliminaries}\label{prelims}
In this section, we recall several basic notions about fusion
categories. For more details, see \cite{ofc, ENO, ENOnew, Os1}.  For the
remainder of the paper, $\cC$ and $\cD$ are fusion categories, $G$
is any finite group, and $A$ is a finite abelian group.
 \renewcommand{\labelenumi}{(\roman{enumi})}
\begin{define} A \emph{fusion} (or \emph{based}) ring $R$ is an
associative ring which is free of finite rank as a $\ZZ$-module,
with fixed $\ZZ$-basis $B=\{X_i\}$ containing $X_0=\mathbf{1}$, and an
involution \mbox{$*:B\to B$} extending to an anti-involution $R\to R$,
such that:
\begin{enumerate}
\item for all $i,j,\ X_iX_j = \sum N(i,j,k)X_k,$ 
where $N(i,j,k)$ are non-negative integers, 
\item $N(i,j^*,0)=\delta_{ij}$.
\end{enumerate}
\end{define}

\begin{define} 
The fusion ring of $\cC$, denoted $K(\cC)$, has as its
basis the isomorphism classes of simple objects of $\cC$, with $N(i,j,k)$
equal to the multiplicity of $X_k$ in $X_i\ot X_j$, and $*$ defined by
the duality in $\cC$.  A \emph{categorification} of a fusion ring $R$
is a fusion category $\cC$ such that $K(\cC)=R$.
\end{define}
A fusion ring can have more than one categorification, or none at all. For
example, consider the group ring $\ZZ[G]$ of a finite group $G$ (with
basis $\{g \in G\}$).  Categorifications of these rings are known as
\emph{pointed categories}.  One such categorification is the category of
$G$-graded vector spaces, \mbox{$V=\oplus_{g\in G}V_g$}, with the trivial
associativity isomorphism. We can construct other categorifications by
letting the associativity isomorphism $\alpha$ be defined on the graded components by 
$$\begin{CD}(U_g \otimes V_h) \otimes W_k @>\xi(g,h,k)>> U_g \otimes (V_h \otimes W_k)\end{CD},$$ 
for some 3-cocycle $\xi \in \CoZ^3(G,\CC^*)$.  
We denote the resulting category $\Vect_{G, \xi}$ (or just
$\Vect_{G}$ if $\xi$ is trivial).  It is well known that $\Vect_{G,\xi}$
depends only on the cohomology class of $\xi$, and these categories are
the only pointed categories. Thus,  categorifications of $\ZZ[G]$ are
parameterized by the set $\CoH^3(G,\CC^*)/\Aut(G)$. On the other hand,
the two-dimensional fusion ring with the basis $\{\mathbf{1},X\}$ and
the fusion rules $X^2= \mathbf{1}+nX$ \omitted{ for some $n\ge 0$,} has
two categorifications when $n=0,1$, and no categorifications for $n>1$
(see \cite{1nx}).

\begin{define} 
The Frobenius-Perron dimension $\FPdim X_i$ of $X_i$ is the
largest positive eigen\-value of the matrix $N_i$ with entries $N(i,j,k)$
(such an eigenvalue exists by the Frobenius-Perron theorem).
\end{define}
For categories of group representations (or more generally,
representations of a semi-simple quasi-Hopf algebra), this is the vector
space dimension; however, in general the dimension need not be an integer
--- it is only an algebraic integer.  If all $\FPdim X_i$ are integers,
we call the category \emph{integral}.

\begin{define}
The \emph{dimension} $|\cC|$ of $\cC$ is the sum of the squares of
dimensions of all simple objects of $\cC$.
\end{define}
For the category of representations of a semi-simple quasi-Hopf algebra
$H$, $|\cC|$ is the vector space dimension of $H$, by Maschke's theorem.

\begin{define} We say that
$\cC$ is \emph{graded} by $G$ if $\cC$ decomposes as a direct sum 
$\cC = \bigoplus_{g \in G} \cC_g$, such that 
$\cC_g \otimes \cC_h \subseteq \cC_{gh}$.  
Let us denote the trivial component of the grading $\cC_e$.
When $G$ is abelian, we refer to the trivial component as $\cC_0$.
By an \emph{extension} of $\cD$ by $G$, we mean a $G$-graded fusion
category $\cC$ with $\cC_e=\cD$.
\end{define}

\begin{lm}\label{veca} \cite{mcdd}
Let $\cC$ be a $G$-graded fusion category whose trivial component $\cC_e$
is pointed, with some component $\cC_g$ containing a unique simple object.
Then $\cC_e\cong \Vect_{A}$, with $A$ abelian.
\end{lm}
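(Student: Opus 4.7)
Plan: Let $\cC_e = \Vect_{A, \xi}$ for some finite group $A$ and 3-cocycle $\xi \in \CoZ^3(A, \CC^*)$, and let $X$ denote the unique simple object of $\cC_g$. The proof proceeds in three stages.

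Stage 1 (absorption). For every $h \in A$, tensoring with $h$ is an autoequivalence of $\cC$ that preserves the $G$-grading, so $h \otimes X$ is a simple object of $\cC_g$ and must coincide with $X$ by uniqueness; the same argument works on the right, giving $h \otimes X \cong X \cong X \otimes h$. Frobenius reciprocity then yields
$$\dim \hom(h, X \otimes X^*) \;=\; \dim \hom(h \otimes X, X) \;=\; 1$$
for every $h \in A$, so
$$X \otimes X^* \;\cong\; \bigoplus_{h \in A} h \qquad \text{and} \qquad \FPdim(X) = \sqrt{|A|}.$$

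Stage 2 (triviality of $\xi$). I view $\cC_g$ as an indecomposable left $\cC_e$-module category with a single simple object. By Ostrik's classification of module categories over a pointed fusion category, indecomposable module categories over $\Vect_{A, \xi}$ are parametrized by pairs $(H, \mu)$ with $H \leq A$ and $\mu \in C^2(H, \CC^*)$ satisfying $d\mu = \xi|_H$, the number of simples being $[A:H]$. Having a single simple forces $H = A$ and $d\mu = \xi$, so $\xi$ is a coboundary and $\cC_e \cong \Vect_A$ with trivial associator.

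Stage 3 ($A$ is abelian) --- the main obstacle. Abelianness does not follow from the Grothendieck ring alone: the relations $[h][X] = [X][h] = [X]$ and $[X^*][X] = \sum_h [h]$ are consistent whether or not $A$ is abelian, so one really must use the underlying categorical structure. The plan is to exploit the $(\cC_e, \cC_e)$-bimodule structure on $\cC_g$, which, being a graded component of a $G$-extension, is invertible. Choose left and right absorption isomorphisms $\phi_h \colon h \otimes X \to X$ and $\psi_h \colon X \otimes h \to X$, and consider, for each $h, k \in A$, the two composite isomorphisms $(h \otimes X) \otimes k \xrightarrow{\sim} X$ obtained by left-first versus right-first absorption, connected through the middle bimodule associator $a_{h, X, k} \colon (h \otimes X) \otimes k \to h \otimes (X \otimes k)$. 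Their ratio is a scalar $\nu(h, k) \in \CC^*$, and the pentagon coherences for the left action, the right action, and the left-right compatibility yield cocycle-type relations on $\nu$ that interlock with the multiplication in $A$. Comparing these relations for the pair $(h, k)$ against their counterparts for the pair $(k, h)$ forces the commutator $[h, k] = hkh^{-1}k^{-1}$ to act as the identity on $X$ through both its left and its right absorption. Since $\End(X) = \CC$ and $\cC_g$ has a single simple --- ruling out an outer-automorphism twist between the two sides --- this is only possible when $[h, k] = e$ in $A$, whence $A$ is abelian.
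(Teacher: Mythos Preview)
Your Stages 1 and 2 are correct and amount to the first half of the paper's argument: a rank-one module category over $\Vect_{A,\xi}$ is the same datum as a fiber functor, which forces $\xi$ to be a coboundary. So far so good.

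Stage 3, however, is not a proof --- it is a plan whose decisive step is asserted rather than carried out. You set up the scalar $\nu(h,k)$ comparing left-first and right-first absorption, note that the various pentagons impose ``cocycle-type relations'' on $\nu$, and then claim that comparing the $(h,k)$ relations with the $(k,h)$ relations ``forces the commutator $[h,k]$ to act as the identity on $X$.'' But you never write down those relations or explain what ``acting as the identity'' means for an element of $A$ whose absorption isomorphism $\phi_{[h,k]}$ is, after all, always an isomorphism. More seriously, the computation you outline uses only the bimodule pentagons, and those hold for \emph{any} $(\Vect_A,\Vect_A)$-bimodule category, invertible or not. In particular the trivial bimodule $\Vect$ (with both actions collapsing every $h$ to the unit) has a single simple object and satisfies all the same pentagons for arbitrary nonabelian $A$, so the coherence relations alone cannot possibly force $A$ abelian. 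You mention invertibility in passing but never feed it into the computation; that is precisely the missing ingredient.

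The paper's argument uses invertibility in one clean stroke: the dual category $(\cC_e)^*_{\cC_g}$ is $\Rep H^*$ where $H=\fun(A)$, hence $\Rep(\CC[A])$; invertibility of $\cC_g$ identifies this dual with $\cC_e$ itself, which is pointed, and $\Rep(\CC[A])$ is pointed exactly when $\CC[A]$ is commutative, i.e.\ when $A$ is abelian. If you want to salvage your explicit approach, the honest way is to recognise that your $\nu,\mu,\mu'$ package is exactly the data describing the tensor functor $\cC_e\to(\cC_e)^*_{\cC_g}$ induced by the right action, and that invertibility is the statement that this functor is an equivalence --- at which point you have reproduced the paper's proof.
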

\begin{proof}
The category $\cC_g$ defines a fiber functor on $\cC_e$, which implies
that $\cC_e=\Rep H$ for some commutative Hopf algebra
$H$. Thus, $\xi=0, H=\fun(G)$.  Now, $\Rep H^*$
is the dual category to $\cC_e$ with respect to $\cC_g$, which is the
same as $\cC_e$. Thus $H^*$ is commutative and $G$ is abelian.
\end{proof}
\begin{define}
The \emph{Picard group} of $\cC$, denoted $\pic (\cC)$, is the set of
all equivalence classes of invertible $\cC$-bimodule categories under
the operation of the tensor product.  Thus, $\pic (\cC)$ is the group
of equivalence classes of Morita autoequivalences of $\cC$.
\end{define}

\begin{define}
Two fusion categories $\cC$ and $\cC'$ are \emph{equivalent}, if there
is an invertible\footnote{by an invertible functor, we mean a functor
with a quasi-inverse.} tensor functor: $\cC \to \cC'$.
If we have two categories $\cC$ and $\cC'$ graded by the same group $G$,
then we say that they are \emph{grading-equivalent} if there is some
invertible tensor functor: $\cC \to \cC'$ which restricts to a functor
$\cC_g \to \cC'_g$ for each $g \in G$.
\end{define}

\begin{thm}\cite{ENOnew}
$\pic (\Vect_{A})$ is the split orthogonal group $O(A \oplus A^*)$.
\end{thm}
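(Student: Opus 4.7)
The plan is to combine two ingredients: the main theorem of \cite{ENOnew} identifying $\pic(\cC)$ with the group of braided tensor autoequivalences of the Drinfeld center $\CoZ(\cC)$, together with an explicit description of $\CoZ(\Vect_A)$ as a pointed braided fusion category.

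First I would verify directly from the definition of the center that $\CoZ(\Vect_A)$ is pointed, with group of isomorphism classes of simple objects canonically isomorphic to $A\oplus A^*$: a simple object of $\CoZ(\Vect_A)$ consists of a simple $V_a\in \Vect_A$ together with a half-braiding, and on a pointed category specifying a half-braiding is the same data as specifying a character $\chi\in A^*$ recording the scalar by which $V_a$ commutes with each $V_b$. The resulting braiding on $A\oplus A^*$ is then computed to be the standard hyperbolic one, with associated quadratic form $q(a,\chi) = \chi(a)$.

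Next I would identify the group of braided tensor autoequivalences of the pointed braided category $(\Vect_{A\oplus A^*}, q)$ with the orthogonal automorphism group of $A\oplus A^*$ equipped with the hyperbolic form. Any tensor autoequivalence of a pointed category descends to a group automorphism on the set of isomorphism classes of simple objects; requiring the autoequivalence to preserve the braiding amounts exactly to requiring this induced group automorphism to preserve $q$. Since $q$ is the hyperbolic quadratic form on $A\oplus A^*$, its isometry group is by definition the split orthogonal group $O(A\oplus A^*)$, and the asserted isomorphism follows.

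The main obstacle, and the reason to appeal to \cite{ENOnew}, is the first ingredient: the theorem that $\pic(\cC)$ is the braided autoequivalence group of $\CoZ(\cC)$ requires substantial machinery about invertible bimodule categories and their tensor products. By comparison, the remaining steps are routine, but some care is required to quotient by tensor natural isomorphisms (which absorbs the potential 2-cocycle ambiguity in lifting a group automorphism to a tensor autoequivalence of $\Vect_{A\oplus A^*}$) and to work modulo braided-monoidal natural isomorphism throughout, so that the matching with $O(A\oplus A^*)$ is bijective on the nose.
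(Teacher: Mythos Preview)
Your proposal is correct and follows essentially the same route as the paper's sketch: identify $\pic(\cC)$ with the group of braided autoequivalences of $Z(\cC)$ via the cited result, compute $Z(\Vect_A)\simeq \Vect_{A\oplus A^*}$ with the hyperbolic braiding, and conclude that the braided autoequivalence group is $O(A\oplus A^*)$. The only minor discrepancy is bibliographic---the paper attributes the ``$\pic(\cC)\cong$ braided autoequivalences of $Z(\cC)$'' step to \cite{ENO}, Theorem 3.1, rather than to \cite{ENOnew}---and your write-up fills in a bit more detail on the computation of the center and its braiding than the paper does.
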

For completeness, let us sketch a proof.  The key point is that Morita
equivalences between fusion categories $\cC$ and $\cD$ are in bijection
with braided equivalences between their Drinfeld centers $Z(\cC)$ and
$Z(\cD)$, and this equivalence maps tensor product of bimodule categories
to composition of functors.  (See \cite{ENO}, Theorem 3.1.) In particular,
the group of Morita auto-equivalences of $\cC$ is naturally isomorphic
to the group of braided autoequivalences of $Z(\cC)$.  In the case
$\cC=\Vect_A$, $Z(\cC)$ is $\Vect_{A\oplus A^*}$, with braiding given
by the standard (split) quadratic form.  Thus, the group of braided
autoequivalences of $Z(\cC)$  is isomorphic to $O(A\oplus A^*)$, and
the result follows.

\begin{thm}\cite{ENOnew} 
Fix a fusion category $\cC_e$.  Then categories $\cC$, graded by $G$,
with trivial component $\cC_e$ are classified, up to grading-equivalence,
by triples $(\rho, h, k)$, where \mbox{$\rho:G\to\pic (\cC_e)$} is a
homomorphism, $h \in \CoH^2(G, \inv(Z(\cC_{e})))$, and 
$k \in \CoH^3(G, \CC^*)$.\footnote{Actually the data $h$ and $k$ belong 
to torsors over the groups $\CoH^2(G, \inv(Z(\cC_{e})))$, and 
$\CoH^3(G, \CC^*)$ respectively rather than to the groups themselves.  
This is a technical point which is not going to matter for our considerations.
Here $\inv(\cD)$ denotes the group of invertible objects of $\cD$.}
There are obstructions $\phi_1(\rho)\in \CoH^{3}(G, \inv(Z(\cC_{e})))$,
and $\phi_2(\rho,h) \in \CoH^4(G, \CC^*)$ which must vanish for
$\cC(\rho,h,k)$ to exist.  Here, we consider this data up to the action
of the group of tensor autoequivalences of $\cC_e$.
\end{thm}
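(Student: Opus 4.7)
The plan is to reconstruct the graded category $\cC = \bigoplus_{g \in G} \cC_g$ layer by layer from its trivial component $\cC_e$, matching the three data $\rho$, $h$, and $k$ to successive obstruction-theoretic invariants.

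First I would show that each homogeneous component $\cC_g$ is an invertible $\cC_e$-bimodule category: the grading axiom together with rigidity of $\cC$ promotes the restriction of the tensor product to bimodule equivalences $M_{g,h} : \cC_g \boxtimes_{\cC_e} \cC_h \xrightarrow{\sim} \cC_{gh}$. Consequently $g \mapsto [\cC_g]$ is a homomorphism $\rho : G \to \pic(\cC_e)$. The full datum of $\cC$ (bimodules, equivalences, and associators) then amounts to lifting $\rho$ from the group $\pic(\cC_e)$ to an appropriate $2$-groupoid of invertible bimodules, and obstruction theory for such lifts produces the remaining data.

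Running this obstruction theory is the heart of the proof. Fix $\rho$; choose representative invertible bimodules $\cD_g$ for $\rho(g)$ and arbitrary equivalences $M_{g,h} : \cD_g \boxtimes_{\cC_e} \cD_h \xrightarrow{\sim} \cD_{gh}$. For each triple $(g,h,k)$, the two composites $\cD_g \boxtimes \cD_h \boxtimes \cD_k \to \cD_{ghk}$ differ by an automorphism of the identity endofunctor of an invertible bimodule, which is canonically identified with $\inv(Z(\cC_e))$ (using that $\End(\cD) \cong Z(\cC_e)$ for every invertible $\cC_e$-bimodule $\cD$). Compatibility of these automorphisms under refinement assembles them into a $3$-cocycle representing the obstruction $\phi_1(\rho) \in \CoH^3(G, \inv(Z(\cC_e)))$; coherent $M_{g,h}$ exist iff it vanishes, and then their isomorphism classes form a torsor over $\CoH^2(G, \inv(Z(\cC_e)))$, which is the datum $h$. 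With $M_{g,h}$ fixed, one next chooses pentagon-coherent associators $\alpha_{g,h,k}$, which are now scalar natural isomorphisms, hence elements of $\CC^*$. Comparing the two ways to reassociate a $4$-fold product produces a $4$-cocycle $\phi_2(\rho,h) \in \CoH^4(G, \CC^*)$; its vanishing is equivalent to the existence of pentagon-coherent associators, and any two such choices differ by a class in $\CoH^3(G, \CC^*)$, which is the datum $k$.

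Finally, to reach grading-equivalence rather than strict equivalence, I would quotient by the action of the group of tensor autoequivalences of $\cC_e$, which permutes choices of representing bimodules and shifts the torsors. The main obstacle is precisely identifying the two obstruction classes and the torsor structures; this requires the canonical isomorphism $\Aut(\id_\cD) \cong \inv(Z(\cC_e))$ uniformly over invertible bimodules $\cD$, and a careful bookkeeping of the coherences for triples and quadruples, essentially reproducing the classical obstruction theory for group extensions with nonabelian kernel one categorical level up.
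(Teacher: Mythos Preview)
The paper does not prove this theorem: it is quoted from \cite{ENOnew} and stated without proof, so there is no ``paper's own proof'' against which to compare your proposal.

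That said, your outline is an accurate sketch of the obstruction-theoretic argument that \cite{ENOnew} carries out: each graded piece $\cC_g$ is an invertible $\cC_e$-bimodule, giving $\rho$; the failure of the multiplication equivalences $M_{g,h}$ to associate is measured by a class in $\CoH^3(G,\inv(Z(\cC_e)))$, with the choices forming an $\CoH^2$-torsor; and the pentagon for the associators is then obstructed by an $\CoH^4(G,\CC^*)$-class, with choices forming an $\CoH^3$-torsor. The key technical input you correctly identify---that automorphisms of the identity functor on an invertible $\cC_e$-bimodule are canonically $\inv(Z(\cC_e))$---is exactly what makes the coefficients come out right. Your sketch would need substantial fleshing out (the categorical-group/2-group formalism, naturality of the identifications, and the precise torsor structures) to become a proof, but as a plan it matches the approach of the cited reference.
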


\section{\boldmath Cyclic extensions of $\Vect_A$}\label{pq2sec}
Let us fix primes $p$ and $q$, and a generator $\cc$ of $\ZZ/p\ZZ$.
\begin{prop}\label{catRpA} 
Let $|A|$ be coprime to $p$. Then categorifications of $R_{p,A}$ are
parameterized by the data $(\rho,\xi)$, where $\xi \in\CoH^3(\ZZ/p\ZZ,
\CC^*) \simeq \ZZ/p\ZZ$, and $\rho:\ZZ/p\ZZ\to O(A \oplus A^*)$
is a homomorphism such that if we write
$$\rho(i) = \left(\begin{array}{cc}
\alpha_i & \beta_i \\
\gamma_i & \delta_i
\end{array}\right),$$
where $\alpha_i: A \to A, \beta_i: A^* \to A, \gamma_i: A \to A^*, \delta_i: A^* \to A^*$, 
then $\beta_i$ is an isomorphism for all $i \neq 0$. 
Two such categorifications are equivalent if, and only if, they are
related by the natural action of $\Aut(\ZZ/p\ZZ)$ and the subgroup of
the orthogonal group of elements of the form
$$\mathbf{\nu}=\left(\begin{array}{cc}
\psi & 0 \\
\varphi & \psi^{-1*}
\end{array}\right),$$
where $\psi^*\varphi$ is skew-symmetric.
\end{prop}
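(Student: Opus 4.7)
The plan is to apply the extension theorem of \cite{ENOnew} with $G = \ZZ/p\ZZ$ and trivial component $\cC_e = \Vect_A$, then translate both the categorification requirement on each graded piece and the equivalence relation into conditions inside $\pic(\Vect_A) = O(A \oplus A^*)$.

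First, I would observe that the fusion rules of $R_{p,A}$ give every categorification $\cC$ an intrinsic $\ZZ/p\ZZ$-grading, with $\cC_i$ containing only the simple $X_i$ for $i \ne 0$ and $\cC_0$ a pointed categorification of $\ZZ[A]$; since $\cC_1$ has a unique simple object, Lemma \ref{veca} forces $\cC_0 \cong \Vect_A$ with trivial associator. Applying the extension theorem, $\ZZ/p\ZZ$-extensions of $\Vect_A$ up to grading-equivalence are parameterized by triples $(\rho,h,k)$ with $\rho: \ZZ/p\ZZ \to O(A \oplus A^*)$, $h \in \CoH^2(\ZZ/p\ZZ, A \oplus A^*)$, and $k \in \CoH^3(\ZZ/p\ZZ, \CC^*)$, subject to obstructions $\phi_1 \in \CoH^3(\ZZ/p\ZZ, A \oplus A^*)$ and $\phi_2 \in \CoH^4(\ZZ/p\ZZ, \CC^*)$, where I have used $\inv(Z(\Vect_A)) = A \oplus A^*$. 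The coprimality $\gcd(|A|,p) = 1$ makes $\CoH^i(\ZZ/p\ZZ, A \oplus A^*) = 0$ for all $i \ge 1$, so $h$ is trivial and $\phi_1$ vanishes; $\CoH^4(\ZZ/p\ZZ, \CC^*) = 0$ annihilates $\phi_2$. What remains is $(\rho, \xi)$ with $\xi = k \in \CoH^3(\ZZ/p\ZZ, \CC^*) \cong \ZZ/p\ZZ$.

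Next, I would identify which $\rho$ yield a categorification of $R_{p,A}$, namely those for which each $\cC_i$ with $i \ne 0$ contains a unique simple object of dimension $\sqrt{|A|}$. Under the identification $\pic(\Vect_A) = O(A \oplus A^*)$ coming from braided autoequivalences of $Z(\Vect_A) = \Vect_{A \oplus A^*}$, the simple objects of the invertible bimodule $\cC_i$ associated to $\rho(i)$ are indexed by the cosets of $A^* + \rho(i)(A^*)$ in $A \oplus A^*$, and a short computation identifies this quotient with $A/\im \beta_i$. Thus $\cC_i$ has a single simple object iff $\beta_i$ is surjective, which, since $|A^*| = |A|$, is equivalent to $\beta_i$ being an isomorphism.

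Finally, I would compute the equivalence relation on $(\rho, \xi)$. The group of tensor autoequivalences of $\Vect_A$ is $\Aut(A) \ltimes \CoH^2(A, \CC^*)$, where $\Aut(A)$ permutes simples and $\CoH^2(A, \CC^*)$ parameterizes monoidal twists by alternating bilinear forms $A \times A \to \CC^*$ (equivalently, skew-symmetric maps $A \to A^*$). Computing the induced braided autoequivalence of $Z(\Vect_A)$ shows that the image of this group inside $\pic(\Vect_A)$ is precisely the parabolic subgroup stabilizing the canonical Lagrangian $A^* \subset A \oplus A^*$: the $\Aut(A)$ factor contributes the diagonal part, and the $\CoH^2$ factor contributes an off-diagonal skew-symmetric block. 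Writing a general element of this parabolic and imposing orthogonality in $O(A \oplus A^*)$ produces exactly the matrix form $\nu$ displayed in the statement, with the condition that $\psi^* \varphi$ be skew-symmetric. This action on $\rho$, combined with the $\Aut(\ZZ/p\ZZ)$-action by precomposition (which converts grading-equivalence to ordinary equivalence of fusion categories), accounts for the full equivalence relation on $(\rho,\xi)$. The main obstacle is establishing the bimodule-Lagrangian dictionary carefully enough to extract the simple-object count in $\cC_i$ as $|A|/|\im \beta_i|$; once this is in hand the cohomological reductions and the parabolic identification are straightforward.
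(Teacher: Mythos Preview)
Your proposal is correct and follows essentially the same route as the paper: invoke Lemma~\ref{veca} to identify $\cC_0$, apply the extension theorem of \cite{ENOnew}, use $\gcd(|A|,p)=1$ to kill the $\CoH^2$ and $\CoH^3$ in $\inv(Z(\cC_0))$ and $\CoH^4(\ZZ/p\ZZ,\CC^*)=0$ for the last obstruction, then read off the $\beta_i$ condition and the equivalence via $\Aut(\Vect_A)$ and $\Aut(\ZZ/p\ZZ)$. If anything you supply more detail than the paper does---it asserts $\FPdim(X_i)^2=|\im\beta_i|$ and that the acting subgroup is $\Aut(\Vect_A)$ without the Lagrangian-coset and parabolic computations you outline.
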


\begin{proof}
Clearly, any categorification $\cC$ of the fusion ring $R_{p,A}$ is
$\ZZ/p\ZZ$-graded. From Lemma~\ref{veca}, $\cC_0 = \Vect_{A}$.  We must
have that $\beta_i$ is an isomorphism for all $i \neq 0$ since 
$\cC_i = \langle X_i \rangle$, and $\FPdim(X_i)^2 = |A| = |\im \beta_i|$.  Now,
$|\inv(Z(\cC_0))|$ divides $|Z(\cC_0)|$, since the dimension of any
subcategory divides the dimension of the category.  But, $|Z(\cC_0)| = |\cC_0|^2$.  
Because we are assuming that $p$ is coprime to $|A|$, $p$ is
coprime to $|\inv(Z(\cC_0))|$, whereby $\CoH^*(\ZZ/p\ZZ, \inv(Z(\cC_0))) = 0$. 
(In particular this implies that the third cohomology group
is trivial.)  We also have no choice for the second piece of data,
since the second cohomology group is also trivial.  Finally, the second
obstruction vanishes because $\CoH^4(\ZZ/p\ZZ, \CC^*) = 0$.  Therefore,
by \cite{ENOnew}, such categories are determined up to grading-equivalence
by the data $(\rho,\xi)$.

It is clear that if $\cC', \cC''$ are two categorifications of $R_{p,A}$,
then any equivalence between $\cC'$ and $\cC''$, will preserve the
grading, up to the action of $\Aut(\ZZ/p\ZZ)$.  Thus, since the subgroup
of the orthogonal group which acts on the data $(\rho,\xi)$ is the group
of autoequivalences of $\cC_0 = \Vect_{A}$, we conclude the statement
of this proposition.
\end{proof}

Now we consider what happens when instead of requiring that each
graded component $\cC_g$ ($g \neq 0$) contains a unique simple object,
we only require that the graded component $\cC_\cc$ contains a unique
simple object.
Since this condition is not invariant under the action of
$\Aut (\ZZ/p\ZZ)$, we classify these categorifications up to
grading-equivalence.

\begin{thm}\label{deltazero} 
Let $p \in \mathbb{N}$ be relatively prime to $|A|$.  Then,
$\ZZ/p\ZZ$-graded categories $\cC$ with trivial component $\Vect_{A}$
such that $\cC_\cc$ contains a unique simple object, are parameterized up
to grading-equivalence by an element of $\CoH^3(\ZZ/p\ZZ, \CC^*) \simeq \ZZ/p\ZZ$, 
together with a map $\alpha: A \to A$, and an isomorphism
$\gamma: A \to A^*$, such that $\gamma^*\alpha$ is skew-symmetric, and
\begin{equation} \label{qpower}
\left(\begin{array}{cc}
\alpha & \id \\
(\gamma^{-1}\gamma^*)^{-1} & 0
\end{array}\right)^p = \id.
\end{equation}
\noindent
\end{thm}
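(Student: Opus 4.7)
The plan is to proceed in the same spirit as the proof of Proposition \ref{catRpA}, invoking the extension classification theorem of \cite{ENOnew}. Since $p$ is coprime to $|A|$, the groups $\CoH^2(\ZZ/p\ZZ, \inv(Z(\Vect_A)))$, $\CoH^3(\ZZ/p\ZZ, \inv(Z(\Vect_A)))$, and $\CoH^4(\ZZ/p\ZZ, \CC^*)$ all vanish, so the categories in question are classified up to grading-equivalence by a pair $(\rho, \xi)$ where $\rho : \ZZ/p\ZZ \to O(A \oplus A^*)$ is a homomorphism and $\xi \in \CoH^3(\ZZ/p\ZZ, \CC^*)$, taken modulo the action of the tensor autoequivalences of $\Vect_A$ (but crucially not modulo $\Aut(\ZZ/p\ZZ)$, since we are classifying only up to grading-equivalence rather than equivalence).

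Next, I would translate the hypothesis that $\cC_\cc$ contains a unique simple object into a constraint on $\rho(\cc) = \begin{pmatrix} \alpha_\cc & \beta_\cc \\ \gamma_\cc & \delta_\cc \end{pmatrix}$. The same $\FPdim$-counting argument as in Proposition \ref{catRpA} shows that this is equivalent to $\beta_\cc$ being an isomorphism. I would then use an autoequivalence of the form $\nu = \begin{pmatrix} \psi & 0 \\ \varphi & \psi^{-1*} \end{pmatrix}$ to further normalize $\delta_\cc = 0$: the correct choice is $\varphi = -\psi^{-1*}\delta_\cc\beta_\cc^{-1}$, and the requisite skew-symmetry of $\psi^*\varphi$ follows from the orthogonality identity $\beta_\cc^*\delta_\cc + \delta_\cc^*\beta_\cc = 0$. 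Once $\delta_\cc = 0$, the remaining orthogonality conditions on $\rho(\cc)$ reduce to $\gamma_\cc^*\beta_\cc = \id$ (so $\beta_\cc = (\gamma_\cc^*)^{-1}$ is determined by $\gamma_\cc$) together with $\gamma_\cc^*\alpha_\cc$ skew-symmetric, which exactly matches the data $(\alpha, \gamma) := (\alpha_\cc, \gamma_\cc)$ of the theorem.

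To recover the matrix appearing in (\ref{qpower}), I would change coordinates from $A \oplus A^*$ to $A \oplus A$ via the identification $\beta_\cc : A^* \to A$ on the second factor. A direct calculation then shows that in these coordinates $\rho(\cc)$ assumes the form $\begin{pmatrix} \alpha & \id \\ \beta_\cc\gamma_\cc & 0 \end{pmatrix}$, and substituting $\beta_\cc = (\gamma^*)^{-1}$ gives $\beta_\cc\gamma_\cc = (\gamma^*)^{-1}\gamma = (\gamma^{-1}\gamma^*)^{-1}$. Since $\rho(\cc)^p = \id$ is preserved by the linear change of coordinates, we arrive exactly at equation (\ref{qpower}); conversely, any datum $(\alpha, \gamma)$ satisfying the skew-symmetry and matrix conditions assembles into a well-defined homomorphism $\rho$, and hence via \cite{ENOnew} into the desired category.

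The main technical obstacle I anticipate is the $\delta_\cc = 0$ normalization step, i.e.\ verifying that the proposed $\varphi$ makes $\psi^*\varphi$ genuinely skew-symmetric --- this requires a careful extraction from the full system of orthogonality relations on $\rho(\cc)$. A secondary concern is to track the residual action of autoequivalences with $\varphi = 0$ (the $\psi$-conjugation action) on the normalized pair $(\alpha, \gamma)$, and to confirm that this action is absorbed implicitly by the parameterization asserted in the theorem.
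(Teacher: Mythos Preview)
Your proposal is correct and follows essentially the same route as the paper's proof: invoke the extension theorem of \cite{ENOnew} (with the same vanishing-of-cohomology argument as in Proposition~\ref{catRpA}), conjugate by a lower-triangular element to force $\delta=0$ using the orthogonality relation $\beta^*\delta+\delta^*\beta=0$, read off $\beta=(\gamma^*)^{-1}$ and the skew-symmetry of $\gamma^*\alpha$, and then conjugate by $\operatorname{diag}(\id,\gamma^{*-1})$ to obtain the matrix in \eqref{qpower}. The only cosmetic difference is that the paper separates the $\psi$-part and the $\varphi$-part of the autoequivalence action, first setting $\psi=\id$ to achieve $\delta=0$ uniquely and then recording the residual $\psi$-action as the change of basis $(\alpha,\gamma)\mapsto(\psi^{-1}\alpha\psi,\psi^*\gamma\psi)$ --- exactly the point you flag as your ``secondary concern.''
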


\begin{proof}
As in Proposition \ref{catRpA}, our categories are determined by the data
$(\rho,\xi)$, except that we only require $\beta_1$ to be an isomorphism.

To specify the homomorphism $\rho$, it suffices to give the image of
the generator $\rho(\cc)$, say:
\begin{equation} \label{M}
\mathbf{M} = \left(
\begin{array}{cc}
\alpha & \beta \\
\gamma & \delta 
\end{array}
\right) \in O(A \oplus A^*)
\end{equation}
such that $\beta$ is invertible and $\mathbf{M}^p = \id$.
However, we must consider such matrices $\mathbf{M}$
up to conjugation by elements of the form:
\begin{equation} \label{conj}
\left(
\begin{array}{cc}
\id & 0 \\
\varphi & \id
\end{array}
\right),
\end{equation}
where $\varphi$ is skew-symmetric, since conjugation by elements of the form 
$$\left(
\begin{array}{cc}
\psi & 0 \\
0 & {\psi^*}^{-1}
\end{array}
\right)$$
amounts to the change of basis 
$(\alpha, \gamma) \mapsto (\psi^{-1}\alpha \psi, \psi^*\gamma \psi)$.

\begin{cl} 
For a given matrix in the form (\ref{M}), there is exactly one matrix
of the form (\ref{conj}) which conjugates it into a matrix where $\delta = 0$.
\end{cl}
\begin{proof}
Observe that when we conjugate (\ref{M}) by (\ref{conj}) we obtain
$$
\left(
\begin{array}{cc}
\id & 0 \\
\varphi & \id
\end{array}
\right) \\
\left(
\begin{array}{cc}
\alpha & \beta \\
\gamma & \delta
\end{array}
\right) \\
\left(
\begin{array}{cc}
\id & 0 \\
-\varphi & \id
\end{array}
\right) \\
= \\
\left(
\begin{array}{cc}
\alpha - \beta\varphi & \beta \\
\star & \varphi \beta + \delta
\end{array}
\right)$$ 
So, a matrix in the form of (\ref{conj}) conjugates $\mathbf{M}$ into a
matrix where $\delta = 0$ if, and only if, 
$\varphi = -\delta\beta^{-1} = -\beta^{-1*}\beta^*\delta\beta^{-1}$, 
which is skew-symmetric because $\mathbf{M}\in O(A\oplus A^*)$.
\end{proof}
Thus, we have reduced the problem to classifying the set of all matrices
$\mathbf{M}$ in the form:
$$\left(\begin{array}{cc}
\alpha & \beta \\
\gamma & 0 
\end{array}
\right) \in O(A \oplus A^*),$$ 
whose $p$\textsuperscript{th} power is the identity matrix.  The condition
that $\mathbf{M} \in O(A \oplus A^*)$ can be expressed as $\gamma^*\alpha$
being skew-symmetric, and $\beta = {\gamma^*}^{-1}$.  Therefore, we
want to find linear maps $\alpha, \gamma$, with $\gamma$ invertible,
such that $\gamma^*\alpha$ is skew-symmetric and
\begin{equation}\left(
\begin{array}{cc}
\alpha & {\gamma^*}^{-1} \\
\gamma & 0
\end{array}
\right) ^ p = \id \label{cond}.\end{equation}
Now, conjugating $\mathbf{M}$ by anything in the general linear group
does not change the property that $\mathbf{M}^p = \id$, so we can replace
$\mathbf{M}$ with
$$\left(\begin{array}{cc}
\id & 0 \\
0 & \gamma^{*-1}
\end{array}\right) \left(
\begin{array}{cc}
\alpha &  {\gamma^*}^{-1} \\
\gamma & 0
\end{array}
\right) \left(\begin{array}{cc}
\id & 0 \\
0 & \gamma^*
\end{array}\right) = \left(\begin{array}{cc}
\alpha & \id \\
(\gamma^{-1}\gamma^*)^{-1} & 0
\end{array}\right)$$
Therefore (\ref{cond}) may be replaced with the condition (\ref{qpower}).
\end{proof}

\begin{rem} 
One may easily deduce Theorem 3.2 of \cite{ty}, 
as a corollary to Theorem \ref{deltazero} above. 
\end{rem}

\begin{lm} \label{uniqueskew}
Let $C$ be a cyclic $q$-group.  Then, up to equivalence, there is
exactly one non-degenerate skew-symmetric map 
$\gamma: C \oplus C \to (C \oplus C)^*$.
\end{lm}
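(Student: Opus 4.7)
The plan is to show that every non-degenerate skew-symmetric $\gamma$ on $C\oplus C$ admits a \emph{symplectic basis}, in which its matrix takes a fixed normal form, so uniqueness up to equivalence is immediate.

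Write $C = \ZZ/q^n\ZZ$ and $A = C \oplus C$, and fix an identification $C^* \cong C$ so that $A^* \cong A$ and $\gamma$ becomes a matrix $M \in GL_2(\ZZ/q^n\ZZ)$ with $M^T = -M$. Equivalence then becomes congruence $M \sim \psi^T M \psi$ for $\psi \in \Aut(A) = GL_2(\ZZ/q^n\ZZ)$. The associated bilinear form $B(x,y) = \gamma(x)(y)$ is skew, and for $q$ odd is automatically alternating ($B(x,x) = 0$ for all $x$).

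First I would pick a primitive element $v_1 \in A$, i.e.\ one with $v_1 \notin qA$. Since $\gamma$ is an isomorphism, $\gamma(v_1)$ is primitive in $A^*$, so $B(v_1,\cdot)\colon A \to \ZZ/q^n\ZZ$ is surjective; then I would choose $v_2$ with $B(v_1,v_2) = 1$. The key step is to verify that $\{v_1,v_2\}$ is a $\ZZ/q^n\ZZ$-basis of $A$. Reducing modulo $q$: if $\bar v_2 = c\bar v_1$ in $A/qA \cong \mathbb{F}_q^2$, then $1 = B(\bar v_1,\bar v_2) = c\,B(\bar v_1,\bar v_1) = 0$, a contradiction. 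So $\bar v_1,\bar v_2$ are $\mathbb{F}_q$-linearly independent; by Nakayama's lemma $\{v_1,v_2\}$ generates $A$, and a generating set of size equal to the rank is automatically a basis.

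In this basis $\gamma$ has matrix $\begin{pmatrix}0 & 1\\ -1 & 0\end{pmatrix}$, independent of the starting form, so any two non-degenerate skew-symmetric forms are congruent. The main obstacle is this basis step: it uses both non-degeneracy (to arrange $B(v_1,v_2) = 1$) and the alternating identity $B(\bar v_1,\bar v_1) = 0$ (to rule out $\bar v_2 \in \mathbb{F}_q\cdot \bar v_1$), together with the local structure of $\ZZ/q^n\ZZ$. When $q = 2$ one must either separately verify the alternating property or appeal to the specific origin of the forms arising in the rest of the paper.
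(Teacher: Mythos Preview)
Your proposal is correct and is precisely the argument the paper has in mind: the paper's entire proof reads ``Identical to the proof of the corresponding theorem for vector spaces,'' and later (in the proof of Lemma~\ref{q-gp}) spells this out as the standard symplectic-basis construction, splitting off $\langle g,g'\rangle$ with $g$ of maximal order and $q^{n-1}\gamma(g,g')\neq 0$. Your caveat about $q=2$ (skew versus alternating) is a legitimate subtlety that the paper does not address either.
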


\begin{proof}
Identical to the proof of the corresponding theorem for vector spaces.
\end{proof}

\begin{lm}\label{uniquegamma} Let $a \in \ZZ$, such that $q \not | ~a^2 - 4$,
and let $C$ be a cyclic $q $-group. Then, up to equivalence, there is exactly one
isomorphism $\gamma: C \oplus C \to (C \oplus C)^*$ such that
$(\gamma^{-1}\gamma^*)^2 = a\gamma^{-1}\gamma^* - \id$.
\end{lm}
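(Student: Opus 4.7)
The plan is to work with $V := C \oplus C$, a free $\ZZ/q^n\ZZ$-module of rank two (where $|C| = q^n$), and set $T := \gamma^{-1}\gamma^*$. Under the equivalence $\gamma \mapsto \psi^*\gamma\psi$ with $\psi \in \Aut(V)$, $T$ is conjugated to $\psi^{-1}T\psi$, so the problem splits into (a) classifying $T$ with $T^2 - aT + \id = 0$ up to $\Aut(V)$-conjugation, and (b) given such a $T$, classifying the isomorphisms $\gamma$ with $\gamma^{-1}\gamma^* = T$ up to the centralizer of $T$ in $\Aut(V)$. First I would rule out $T = cI$ being a scalar: this would force $\gamma^* = c\gamma$, hence $c^2 = 1$, and then $c^2 - ac + 1 = 0$ would give $a = \pm 2$, contradicting $q \nmid a^2 - 4$. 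Thus the minimal and characteristic polynomials of $T$ both equal $x^2 - ax + 1$, and $V$ becomes a module over $R := \ZZ/q^n\ZZ[x]/(x^2-ax+1)$, with conjugacy classes of $T$ corresponding to $R$-module isomorphism classes of $V$.

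For part (a), I would show $V \cong R$ as an $R$-module. Because $q \nmid a^2 - 4$, Hensel's lemma identifies $R$ as either $\ZZ/q^n\ZZ \times \ZZ/q^n\ZZ$ (the split case, when $a^2 - 4$ is a square mod $q$) or a local ring with residue field $\mathbb{F}_{q^2}$ (the irreducible case). In the irreducible case, $V/qV$ has order $q^2 = |R/qR|$, so it is one-dimensional over $\mathbb{F}_{q^2}$; Nakayama then makes $V$ cyclic as an $R$-module, and $|V| = |R|$ forces $V \cong R$. In the split case, the idempotent decomposition $V = e_1V \oplus e_2V$ must have both summands nonzero (else $T$ would be the scalar $\zeta_iI$), and since $V$ has $\ZZ/q^n\ZZ$-elementary divisors $(q^n, q^n)$, this forces each $e_iV \cong \ZZ/q^n\ZZ$, again giving $V \cong R$.

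For part (b), I would fix $T$ to be the companion matrix of $x^2-ax+1$ in the basis $\{1, T\}$ of $V \cong R$. Representing $\gamma$ as a matrix $G$, the equation $\gamma T = \gamma^*$ reads $GT = G^T$; its solutions form a free rank-one $\ZZ/q^n\ZZ$-module of scalar multiples $G = b \cdot G_0$ of a fixed matrix $G_0$, with invertibility requiring $b \in (\ZZ/q^n\ZZ)^*$ (since $\det G_0$ is, up to sign, $2 - a$, which is a unit). The centralizer of $T$ in $\Aut(V)$ is $R^*$, and a direct computation shows its conjugation action scales $b$ by the norm map $N \colon R^* \to (\ZZ/q^n\ZZ)^*$. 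This norm is surjective---on residue fields, $N$ is $x \mapsto x^{q+1}$ in the irreducible case (mapping $\mathbb{F}_{q^2}^*$ onto $\mathbb{F}_q^*$) or componentwise multiplication in the split case, and the surjectivity lifts to $\ZZ/q^n\ZZ$ by Hensel---so $b$ ranges over all units and $\gamma$ is unique up to equivalence. The main obstacle is part (a), the identification $V \cong R$ as an $R$-module over the non-field base $\ZZ/q^n\ZZ$, which requires Nakayama in the irreducible case and an elementary-divisor analysis in the split case; part (b) then reduces to explicit $2\times 2$ matrix arithmetic together with the surjectivity of the norm.
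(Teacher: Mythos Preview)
Your proof is correct and takes a genuinely different, more structural route than the paper.

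The paper argues both steps by explicit $2\times 2$ matrix manipulation over $\ZZ/q^n\ZZ$. For step~(a) it writes down, for an arbitrary $x=\gamma^{-1}\gamma^*$ with characteristic polynomial $t^2-at+1$, an explicit conjugating matrix to the companion form, checking by hand that a suitable parameter can be chosen in $\mathbb{F}_q$ and then lifted. For step~(b) it solves $\gamma^*=\gamma x$ to obtain $\gamma=d\left(\begin{smallmatrix}1&a-1\\1&1\end{smallmatrix}\right)$, and then shows any two values of $d$ are related by a centralizer element by reducing to the equation $y^2+ayt+t^2=1/d$ over $\ZZ/q^n\ZZ$; this is handled by Hensel plus a pigeonhole argument on quadratic residues to represent $4/d$ by the form $z^2+(4-a^2)w^2$ over $\mathbb{F}_q$.

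Your approach replaces both computations by structure theory. Viewing $V$ as a module over $R=\ZZ/q^n\ZZ[x]/(x^2-ax+1)$ and invoking Nakayama (irreducible case) or the idempotent splitting (split case) to get $V\cong R$ is cleaner than the paper's explicit conjugation, and it makes transparent why the hypothesis $q\nmid a^2-4$ enters: it is exactly what makes $R$ \'etale over $\ZZ/q^n\ZZ$. Likewise, recognizing the residual action in step~(b) as multiplication by the norm $N\colon R^*\to(\ZZ/q^n\ZZ)^*$ and proving its surjectivity is both shorter and more conceptual than the paper's quadratic-residue count; indeed the paper's equation $y^2+ayt+t^2=1/d$ is nothing other than $N(y+tx)=1/d$. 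What the paper's approach buys is complete elementarity: no appeal to Nakayama or to properties of norm maps, just $2\times 2$ arithmetic. What yours buys is a proof that would generalize with essentially no change to free modules of higher rank or to other base rings, and that explains rather than verifies.

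One small point worth tightening in your write-up: in the split case, the assertion that each $e_iV\cong\ZZ/q^n\ZZ$ follows from the elementary divisors deserves one more line. Since $V/qV$ has $\mathbb{F}_q$-dimension~$2$ and both $e_iV/qe_iV$ are nonzero, each must be one-dimensional, so each $e_iV$ is cyclic; then $|e_1V|\cdot|e_2V|=q^{2n}$ together with the exponent bound $q^n$ forces both orders to be $q^n$.
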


\begin{proof}
Write $C = \ZZ/q^n\ZZ$.  First, assume to the contrary that
$q^{n-1}\gamma^{-1}\gamma^*$ is multiplication by some constant
$q^{n-1}\lambda$.  Then, 
$q^{n-1}\gamma = (q^{n-1}\gamma^*)^* = (q^{n-1} \lambda \gamma)^* = q^{n-1} \lambda^2 \gamma$.  
Thus, modulo $q$, 
$\lambda^2 = 1$, so $q^{n-1}\id = q^{n-1}(\lambda\id)^2 = q^{n-1}(a\lambda\id - \id) = q^{n-1}(a\lambda - 1)\id$.  
Thus, $1 = \lambda^2 = (2/a)^2$ modulo $q $, which contradicts our
assumption that $q $ does not divide $a^2 - 4$.  Thus, the characteristic
polynomial of $x = \gamma^{-1}\gamma^*$ is $t^2 - at + 1$, since that
is the minimal polynomial and of the correct degree.  Now, we claim that
there is exactly one equivalence class for $x$, namely the class of
\begin{equation} \label{xclass}
x = \left(\begin{array}{cc}
a & 1 \\
-1 & 0
\end{array}\right).
\end{equation}
Since the characteristic polynomial of $x$ is $t^2 - at + 1$, $x$ has the form
$$x = \left(\begin{array}{cc}
a-d & b \\
c & d
\end{array}\right),$$
where $d(a-d) - bc = 1$, and $a \neq 2d$ modulo $q $ if $b = c = 0$ modulo $q$,
because we already proved that $q^{n-1}x$ was not multiplication by a scalar.
If $d(a-d) - bc = 1$, it is straight-forward to check
$$\left(\begin{array}{cc}
a-d & b \\
c & d
\end{array}\right) \left(\begin{array}{cc}
(a-d)y + b & y \\
cy + d & 1
\end{array}\right) = \left(\begin{array}{cc}
(a-d)y + b & y \\
cy + d & 1
\end{array}\right) \left(\begin{array}{cc}
a & 1 \\
-1 & 0
\end{array}\right).$$
Thus, we are done if there is $y \in \mathbb{F}_q $ so that
$$-cy^2 + (a - 2d)y + b = \det \left(\begin{array}{cc}
(a-d)y + b & y \\
cy + d & 1
\end{array}\right) \neq 0,$$
as we can take an arbitrary lift of $y$ into $\ZZ/q^n\ZZ$ to finish.
Since we cannot have $b = c = 0, a - 2d = 0$ modulo $q$, there is some
$y \in \mathbb{F}_q$ that finishes the claim, unless 
$q = 2$ and $-cy^2 + (a - 2d)y + b = y^2 + y$.  
But in the latter case, it follows that $\det x = 0$, which contradicts
the invertibility of $x$.  Therefore, we may assume (\ref{xclass}).
If we write
$$\gamma = \left(\begin{array}{cc}
b & c \\
d & e
\end{array}\right),$$
then we have that
$$\left(\begin{array}{cc}
b & d \\
c & e
\end{array}\right) = \gamma^* = \gamma x = \left(\begin{array}{cc}
b & c \\
d & e
\end{array}\right) \left(\begin{array}{cc}
a & 1 \\
-1 & 0
\end{array}\right) = \left(\begin{array}{cc}
ab-c & b \\
ad-e & d
\end{array}\right).$$
Therefore, 
$$\gamma = \left(\begin{array}{cc}
d & (a-1)d \\
d & d
\end{array}\right).$$
But, for any $y, t$, such a map is equivalent to
$$\left(\begin{array}{cc}
ay+t & -y \\
y & t
\end{array}\right)\gamma \left(\begin{array}{cc}
ay+t & y \\
-y & t
\end{array}\right) = \left(\begin{array}{cc}
d(y^2+ayt+t^2) & (a-1)d(y^2+ayt+t^2) \\
d(y^2+ayt+t^2) & d(y^2+ayt+t^2)
\end{array}\right).$$
In order to show the statement of this lemma, it suffices
to show that there are $y, t \in \ZZ/q^n\ZZ$
so that $y^2 + ayt + t^2 = 1/d$. 
By Hensel's Lemma, it suffices to prove there is a solution modulo $q$
such that $2y + at \neq 0$.

If $q = 2$, this is clear, since we can take $y = t = 1$.  ($d$ and
$a$ must both be $1$, since $\gamma$ is invertible.)  If $q \neq 2$,
then it is equivalent to $(2y+at)^2 + (4 - a^2)t^2 = 4/d$. Observe
that we may choose $2y + at$ and $t$ independently; that is we want
to find $z, w \in \mathbb{F}_q $ so that $z^2 + (4 - a^2)w^2 = 4/d$.
If $4/d$ is a quadratic residue, we may choose $z^2 = 4/d, w^2 = 0$.
Therefore, suppose that $d$ is not a quadratic residue. Then, if 
$(4 - a^2)$ is also not a quadratic residue, we may take 
$z = 0, w^2 = 4/((4 - a^2)d)$.  So, say that $4 - a^2$ is a quadratic residue, 
$4 - a^2 = f^2$.  Then, our equation becomes $z^2 + (fw)^2 = 4/d$, where 
$f \neq 0$.  Define the sets $S_i = \{2/d + i, 2/d - i\}$ for 
$i = 0, 1, 2, \ldots \frac{q -1}{2}$.  
By the pigeonhole principle, we must either
have that $2/d$ is a quadratic residue, in which case we are done, or
that two quadratic residues in the same $S_i$, say $\{z^2, (fw)^2\} = S_i$, 
which implies that $z^2 + (fw)^2 = 4/d$.
\end{proof}

\begin{lm} \label{Zpgrading}
Let $\cC$ be an integral fusion category of Frobenius-Perron dimension $pq^2$.
Then, either $\cC$ is faithfully graded by $\ZZ/p\ZZ$, or $\cC$ is group-theoretical.
\end{lm}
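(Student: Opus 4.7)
Since $\FPdim(\cC) = pq^2$ is of the form $p^aq^b$, the main theorem of \cite{ENO} forces $\cC$ to be solvable. My plan is to analyze the universal grading $\cC = \bigoplus_{u \in U(\cC)} \cC_u$, whose trivial component is the adjoint subcategory $\cC_{ad}$, satisfying $|U(\cC)| \cdot \FPdim(\cC_{ad}) = pq^2$. If $p$ divides $|U(\cC)|$, then since $U(\cC)$ is a finite abelian group, it admits $\ZZ/p\ZZ$ as a quotient, and coarsening the universal grading through this surjection produces a faithful $\ZZ/p\ZZ$-grading of $\cC$, placing us in the first alternative of the lemma.

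Henceforth assume $p \nmid |U(\cC)|$, so that $|U(\cC)| \in \{1, q, q^2\}$ and correspondingly $\FPdim(\cC_{ad}) \in \{pq^2, pq, p\}$. In each subcase the aim is to prove $\cC$ is group-theoretical. When $\FPdim(\cC_{ad}) = p$, the adjoint is the pointed category $\Vect_{\ZZ/p\ZZ}$, so $\cC$ is an iterated cyclic prime-order extension of $\Vect$, hence nilpotent; an integral nilpotent fusion category is group-theoretical by \cite{DGNO}. When $\FPdim(\cC_{ad}) = pq$, the classification of fusion categories of dimension $pq$ in \cite{pq} forces $\cC_{ad}$ to be group-theoretical, and I would then combine the extension theory of \cite{ENOnew} with a direct analysis of $\pic(\cC_{ad})$ and the attached cohomological obstructions to promote this to group-theoreticity of the $\ZZ/q\ZZ$-extension $\cC$.

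The main obstacle is the remaining case $U(\cC) = 1$, in which $\cC = \cC_{ad}$ admits no nontrivial grading to exploit; a concrete example is $\Rep(A_4)$ with $p=3$, $q=2$. Here I would use that solvability guarantees $\inv(\cC) \neq 1$ to extract a nontrivial pointed subcategory, and then use Morita equivalence with respect to an indecomposable module category built from this pointed subcategory to pass to a fusion category $\cC^\vee$ Morita-equivalent to $\cC$ whose universal grading group is strictly larger. Iterating this reduction until one of the previous subcases applies, and observing that group-theoreticity is a Morita invariant, then yields the conclusion. Carrying out this Morita reduction rigorously within the dimension constraint $pq^2$ --- and verifying it produces the needed grading each time --- is where I expect the bulk of the technical work to lie.
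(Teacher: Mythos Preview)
Your route via the universal grading of $\cC$ itself is different from the paper's, and it leaves two genuine gaps.

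The first is the case $\FPdim(\cC_{ad})=pq$. Knowing that $\cC_{ad}$ is group-theoretical does not by itself make the $\ZZ/q\ZZ$-extension $\cC$ group-theoretical: Tambara--Yamagami categories are $\ZZ/2\ZZ$-extensions of $\Vect_A$, yet some of them are not group-theoretical. The ``direct analysis of $\pic(\cC_{ad})$'' you gesture at would require computing the Picard group of an arbitrary group-theoretical category of dimension $pq$ and locating invariant Lagrangians there, which is not a routine step. The second gap is the case $U(\cC)=1$. Your iterative Morita-reduction plan is, in effect, an attempt to reprove by hand the theorem of \cite{ENO} that every fusion category of dimension $p^aq^b$ is Morita equivalent to a nilpotent one; you would still owe an argument that each step strictly enlarges the universal grading group and that the process terminates within the dimension bound.

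The paper sidesteps both issues by invoking that Morita-to-nilpotent theorem from \cite{ENO} at the very first line. Thus $\cC$ is Morita equivalent to a nilpotent $\cD$ of dimension $pq^2$, which carries a faithful $\ZZ/p\ZZ$- or $\ZZ/q\ZZ$-grading. In the $\ZZ/q\ZZ$ case the trivial component $\cD_0$ has dimension $pq$ and is group-theoretical by \cite{pq}; using \cite[Lemma~3.3]{ENO} one replaces $\cD$ by a Morita-equivalent $\ZZ/q\ZZ$-extension $\cD'$ of a \emph{pointed} category of dimension $pq$, and then a one-line dimension count (the only possible object dimensions in $\cD'$ are $1,\sqrt{p},\sqrt{q},\sqrt{pq}$, and $\cD'$ is integral) forces $\cD'$ pointed. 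In the remaining case --- $\cC$ not Morita equivalent to any $\ZZ/q\ZZ$-graded category and not itself faithfully $\ZZ/p\ZZ$-graded --- solvability forces $\cC$ to be a $\ZZ/p\ZZ$-equivariantization of a dimension-$q^2$ (hence pointed) category, and equivariantizations of pointed categories are group-theoretical. So the key move you are missing is to pass to a Morita-equivalent nilpotent category in one stroke rather than attempting to manufacture gradings on $\cC$ step by step.
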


\begin{proof}
By \cite{ENO}, any fusion category of dimension $p^mq^n$ is Morita
equivalent to a nilpotent fusion category.  Therefore, every fusion
category of dimension $pq^2$ is either Morita equivalent to a category
with a faithful $\ZZ/q \ZZ$ grading, or one with a faithful $\ZZ/p\ZZ$
grading.

Suppose that $\cC$ is Morita equivalent to a category $\cD$ with a
faithful $\ZZ/q \ZZ$ grading.  Let $\cD_0$ be the trivial component of
the grading. Then, $\cD_0$ is an integral fusion category of dimension
$pq$. Therefore, by \cite{pq}, $\cD_0$ is group-theoretical, and thus
Morita equivalent to a pointed category $\cD_0'$, and $\cD$ is Morita
equivalent to some $\ZZ/q \ZZ$ graded category $\cD'$ whose trivial
component is $\cD_0'$ (see \cite{ENO}, Lemma 3.3).  But the possible
dimensions of objects of $\cD'$ are only $1, \sqrt{q }, \sqrt{p},
\sqrt{q p}$.  Thus, since Morita equivalence preserves integrality,
$\cD'$ is pointed, and therefore $\cC$ is group-theoretical.

Next, suppose that $\cC$ is not Morita equivalent to a category $\cD$ with
a faithful $\ZZ/q \ZZ$ grading, and that it does not possess a faithful
$\ZZ/p\ZZ$ grading. Then, since all fusion categories of dimension $pq^2$
are solvable \cite{ENO}, $\cC$ is an equivariantization of some category
$\cC_0$ of dimension $q^2$ by $\ZZ/p\ZZ$.  Since all integral categories
of dimension $q^2$ are pointed, and any equivariantization of a pointed
category is group-theoretical, the statement of this lemma follows.
\end{proof}

\begin{lm} \label{q2grouptheoretical}
Any integral fusion category of Frobenius-Perron dimension $pq^2$,
which is $\ZZ/p\ZZ$-graded, such that the trivial component of the grading
is $\Vect_{\ZZ/q^2\ZZ, \xi}$, is group-theoretical.
\end{lm}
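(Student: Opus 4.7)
My plan is to classify $\cC$ via the extension theory and reduce the non-pointed case to a (twisted) Tambara--Yamagami extension of $\Vect_{\ZZ/q^2\ZZ,\xi}$, which I will show is group-theoretical by exhibiting a Lagrangian subgroup in the cyclic group $\ZZ/q^2\ZZ$.

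First I will set up the classification. By the extension theorem following Lemma \ref{veca}, $\cC$ is determined by a homomorphism $\rho\colon \ZZ/p\ZZ \to \pic(\cC_0)$ together with cohomological data $(h,k)$. Because $|A|=q^2$ is coprime to $p$, the same vanishing argument as in Proposition \ref{catRpA} shows that $h$ and both obstructions vanish, so only $\rho$ and a class $k \in \CoH^3(\ZZ/p\ZZ, \CC^*)$ carry information. Each non-trivial graded component has Frobenius--Perron dimension $q^2$, and integrality together with a dimension count shows that every $\cC_i$ is either pointed (consisting of $q^2$ invertible simples) or contains a single simple object of dimension $q$. If all components are pointed then $\cC$ itself is pointed and hence trivially group-theoretical.

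Second, assuming some non-trivial component contains a unique simple, I relabel the generator of $\ZZ/p\ZZ$ so that this is $\cC_\cc$ and apply a mild extension of Theorem \ref{deltazero} (which was stated for the untwisted trivial component, but whose argument goes through for $\cC_0 = \Vect_{A,\xi}$). Thus $\cC$ is described by data $(\alpha, \gamma, k)$ with $\gamma^*\alpha$ skew-symmetric and satisfying \eqref{qpower}. Using the canonical identification $A \cong A^*$ for the cyclic group $A = \ZZ/q^2\ZZ$, one has $\gamma^{-1}\gamma^* = \id$, so the relevant matrix becomes $M = \begin{pmatrix}\alpha & 1 \\ 1 & 0\end{pmatrix}$, and skew-symmetry of $\gamma^*\alpha$ becomes $2\alpha\gamma \equiv 0 \pmod{q^2}$; since $\gamma$ is a unit, $\alpha$ lies in the $2$-torsion of $A$, which is trivial for $q$ odd and equal to $\{0,2\}$ for $q=2$. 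A direct computation shows $M^2 = \id$ when $\alpha = 0$ and that $M$ has order $4$ when $\alpha = 2 \in \ZZ/4\ZZ$. Since $M^p = \id$ for a prime $p$ coprime to $q$, this forces $\alpha = 0$ and $p = 2$; since $p \neq q$, we must also have $q$ odd.

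Finally I will treat the remaining case: $\cC$ is a $\ZZ/2\ZZ$-graded extension of $\Vect_{A,\xi}$ with $A=\ZZ/q^2\ZZ$ cyclic (odd $q$) and $\cC_\cc$ containing a unique simple of dimension $q$, i.e., a (twisted) Tambara--Yamagami category on $A$. The characteristic subgroup $qA \cong \ZZ/q\ZZ$ has order $q = \sqrt{|A|}$, and the associated non-degenerate symmetric bilinear form $\chi$ on $A$ takes values in $q^2$-th roots of unity, so $\chi(qa,qb) = \chi(a,b)^{q^2} = 1$. Hence $qA$ is a Lagrangian subgroup of $A$, and by the standard criterion for group-theoreticalness of Tambara--Yamagami categories (compare \cite{ty}), $\cC$ is group-theoretical. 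The hardest part will be this last step: carefully justifying the Lagrangian criterion in the twisted case $\xi \neq 0$ on the trivial component. I expect this to follow by constructing an explicit indecomposable $\cC$-module category from the Lagrangian $qA$ (together with a suitable $2$-cochain on $qA$ trivializing $\xi|_{qA}$) and checking that its dual fusion category is pointed, but the cocycle book-keeping will require care.
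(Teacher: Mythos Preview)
Your approach is essentially correct but takes a longer and unnecessarily complicated route compared with the paper, and one of your anticipated difficulties is illusory.

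First, you need not worry about the twisted case $\xi \neq 0$ at all. Once some component $\cC_g$ contains a unique simple object, Lemma~\ref{veca} immediately forces $\cC_0 \cong \Vect_A$ with trivial associator. So your proposed ``mild extension of Theorem~\ref{deltazero}'' and the ``hardest part'' concerning Lagrangians in the twisted Tambara--Yamagami case are both unnecessary: after invoking Lemma~\ref{veca} you are already in the untwisted setting, and Theorem~\ref{deltazero} together with the standard TY Lagrangian criterion apply directly.

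Second, the paper's proof is considerably shorter and avoids your case analysis entirely. After reducing to $\xi = 0$ via Lemma~\ref{veca}, the paper simply observes that the subgroup $q\bigl(\ZZ/q^2\ZZ \oplus (\ZZ/q^2\ZZ)^*\bigr)$ of $A \oplus A^*$ is Lagrangian with respect to the split quadratic form and is invariant under the \emph{entire} orthogonal group $O(A \oplus A^*)$ (being the image of multiplication by $q$, it is preserved by every group automorphism). Hence for \emph{any} homomorphism $\rho\colon \ZZ/p\ZZ \to O(A \oplus A^*)$ this Lagrangian is $\rho$-invariant, and $\cC$ is group-theoretical. No analysis of the specific form of $\rho$, no reduction to $p = 2$, and no appeal to the Tambara--Yamagami criterion are needed.

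That said, your route does extract the extra information that such non-pointed extensions exist only when $p = 2$, which is sharper than what the lemma asserts; so your longer argument buys a stronger conclusion, while the paper's argument buys brevity and uniformity in $p$.
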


\begin{proof}
Since our category is integral, it is either pointed, in which case we are
done, or there is an object of dimension $q $. If that is the case, then
by Lemma~\ref{veca}, $\xi = 0$.  Thus, the Picard group of the trivial
component is $O(\ZZ/q^2\ZZ + (\ZZ/q^2\ZZ)^*)$.  Therefore, the category
must be group-theoretical, since $q (\ZZ/q^2\ZZ + (\ZZ/q^2\ZZ)^*)$ is
an invariant Lagrangian subspace, under any action, with respect to the
split quadratic form.
\end{proof}

\begin{lm} \label{commutes}
Let $\alpha$ and $\gamma$ be $2 \times 2$ matrices, with $\gamma$
invertible, and $\alpha^*\gamma$ skew-symmetric.  Then, $\alpha$ commutes
with $\gamma^{-1}\gamma^*$.
\end{lm}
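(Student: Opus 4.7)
The plan is to exploit the low-dimensional miracle that in $2\times 2$, every skew-symmetric matrix is a scalar multiple of the standard symplectic form $J = \bigl(\begin{smallmatrix}0 & 1 \\ -1 & 0\end{smallmatrix}\bigr)$. Combined with the $2\times 2$ identity $JMJ^{-1} = (\det M)\,M^{-*}$ for invertible $M$ (a repackaging of the adjugate formula $M^{-1} = (\det M)^{-1}\operatorname{adj}(M)$), this collapses the desired commutation into a single scalar identity.

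First I would write the hypothesis as $\alpha^{*}\gamma = cJ$ for some scalar $c$. Dualizing and using $J^{*} = -J$ gives the explicit formula $\alpha = -c\,\gamma^{-*}J$, reducing everything to a single equation in $\gamma$ and $c$.

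Next I substitute this formula into both sides of $\alpha(\gamma^{-1}\gamma^{*}) = (\gamma^{-1}\gamma^{*})\alpha$. The right-hand side simplifies at once to $-c\gamma^{-1}J$ after canceling $\gamma^{*}\gamma^{-*}$. For the left-hand side I get $-c\,\gamma^{-*}J\gamma^{-1}\gamma^{*}$, and here I apply the $2\times 2$ identity twice: with $M=\gamma^{-1}$ it rewrites $J\gamma^{-1} = (\det\gamma)^{-1}\gamma^{*}J$, and with $M = \gamma^{*}$ it rewrites $J\gamma^{*} = (\det\gamma)\gamma^{-1}J$. The two factors of $\det\gamma$ cancel, leaving $-c\gamma^{-1}J$ on the left as well, so the two sides agree.

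There is no substantive obstacle: once the scalar-multiple-of-$J$ observation is in place, the remainder is bookkeeping. A brute-force computation in matrix entries would also work, but the $J$-identity route is cleaner and makes transparent that the conclusion is special to dimension $2$ (the skew-symmetric $2\times 2$ matrices form a rank-one module), and since it uses only the adjugate formula it applies over any commutative ring, in particular over $\ZZ/q^{n}\ZZ$ as needed by the surrounding lemmas.
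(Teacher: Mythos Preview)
Your proof is correct and is essentially the same as the paper's: both hinge on the fact that a skew-symmetric $2\times 2$ matrix is a scalar multiple of $J$, together with the adjugate identity (equivalently $MJM^{*}=(\det M)J$, or your form $JMJ^{-1}=(\det M)M^{-*}$). The paper packages this slightly differently, setting $\varphi=\alpha\gamma^{-1}$, observing that $\varphi$ is skew-symmetric, and then noting that $\gamma\varphi\gamma^{*}=\gamma^{*}\varphi\gamma$ (both equal $c(\det\gamma)J$), from which the commutation follows; but the underlying mechanism is identical to yours.
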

\begin{proof}
Let $\varphi=\alpha\gamma^{-1}$.  Since $\alpha^*\gamma$ is
skew-symmetric, $\varphi$ is as well.  Explicit computation reveals
that $\gamma\varphi \gamma^* = \gamma^* \varphi \gamma$, which implies
$\alpha\gamma^{-1}\gamma^* = \gamma^{-1}\gamma^*\alpha$.
\end{proof}

\section{Proof of Theorem \ref{pq2}}\label{pfthmpq2}
Write $A = (\ZZ/q \ZZ)^2$.  By Lemmas \ref{Zpgrading} and
\ref{q2grouptheoretical}, either $\cC$ is group-theoretical, or
$\cC$ is a $\ZZ/p\ZZ$-graded category with trivial component
$\Vect_{A, \xi}$.  Since $\cC$ has all objects of integral dimension
by assumption, either $\cC$ is pointed, in which case we are done,
or $\cC$ has an object of dimension $q $, in which case by
Lemma~\ref{veca}, $\xi = 0$.  Therefore, if $p = 2$, by \cite{ty}
such categories are parameterized by a quadratic form $\gamma$, and
by \cite{gnn} such categories are group-theoretical if, and only
if, there is a subgroup $L \subset A$, such that $|L| = \sqrt{|A|} = q$, 
such that $\gamma$ is $0$ when restricted to $L$. Since $A$
is a two-dimensional vector space, this is equivalent to the form
$\gamma$ being isotropic. This completes the proof if $p = 2$.
Thus, we will assume that $p$ is odd.

In particular, $\cC$ is a $\ZZ/p\ZZ$-graded category with trivial
component $\Vect_{A}$ such that $\cC_\cc$ contains a single simple object,
where $\cc$ is the generator of $\ZZ/p\ZZ$.  From Theorem \ref{deltazero},
such $\cC$ are parameterized by an element of $\CoH^3(\ZZ/p\ZZ, \CC^*)$,
together with an equivalence class of a pair of maps 
$\alpha: A \to A, \gamma: A \to A^*$, 
where $\gamma$ is an isomorphism, which satisfy
$\gamma^*\alpha$ skew-symmetric, and (\ref{qpower}).

Write $x$ for $\gamma^{-1}\gamma^*$.  From Lemma~\ref{commutes}, we have
that $\alpha$ and $x$ commute. Consider the matrix
$$\mathbf{M} = \left(\begin{array}{cc}
\alpha & \id \\
x^{-1} & 0
\end{array}\right)$$
as a two-by-two matrix over the commutative subring of matrices generated
by $\alpha$ and $x$.  Then, since $\mathbf{M}^p = \id$, we must have
$\det(\mathbf{M})^p = \id$. Since $\det(\mathbf{M}) = -x^{-1}$, we have
that $x^p = -\id$.
Over the algebraic closure of $\mathbb{F}_q $, we may choose a basis such that 
$$x =\left(\begin{array}{cc}
-\mu & 0 \\
0 & -\lambda
\end{array}\right),$$
where $\lambda^p = \mu^p = 1$.  
Since $\det x = \det \gamma^{-1} \det \gamma^* = 1$, 
$\mu = \lambda^{-1}$.  By Lemma~\ref{uniquegamma} there
is exactly one solution, up to equivalence, to the equation $\gamma^*
= \gamma x$, where $\gamma$ is invertible.  This equation is a system
of four linear equations in the entries of the matrix for $\gamma$.
Solving it yields
$$\gamma = \left(\begin{array}{cc}
0 & -\lambda^{-1} g \\
g & 0
\end{array}\right).$$
Write
$$\alpha = \left(\begin{array}{cc}
a & b \\
c & d
\end{array}\right).$$
Then, we have that
$$\gamma^*\alpha = \left(\begin{array}{cc}
0 & g \\
-\lambda g & 0
\end{array}\right) \left(\begin{array}{cc}
a & b \\
c & d
\end{array}\right) = \left(\begin{array}{cc}
cg & dg \\
-a\lambda^{-1} g & -b\lambda^{-1} g
\end{array}\right)$$
is skew-symmetric. In other words,
$$\alpha = \left(\begin{array}{cc}
a & 0 \\
0 & a\lambda^{-1}
\end{array}\right).$$
We must therefore have:
$$\left(\begin{array}{cccc}
a & 0 & 1 & 0 \\
0 & a\lambda^{-1} & 0 & 1 \\
-\lambda & 0 & 0 & 0 \\
0 & -\lambda^{-1} & 0 & 0
\end{array}\right)^p = \id,$$
or equivalently, there exist $\zeta_1, \zeta_2$ distinct
$p$\textsuperscript{th} roots of unity such that
$$\left(\begin{array}{cc}
a & 1 \\
-\lambda & 0
\end{array}\right)$$ 
is conjugate to 
$$\left(\begin{array}{cc}
\zeta_1 & 0 \\
0 & \zeta_2
\end{array}\right),$$
i.e.\ $a = \zeta_1 + \zeta_2, \lambda = \zeta_1\zeta_2$.  Since $p$
is odd, $x$ and $\alpha$ have the same block form, and therefore the
same centralizer.

\begin{cl} 
There exists a basis so that both $x$ and $\alpha$ are matrices over
$\mathbb{F}_q$ if, and only if, $p | q^2 - 1$.
\end{cl}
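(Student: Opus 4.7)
The plan is to analyze when the diagonal $\overline{\mathbb{F}_q}$-matrices $\alpha$ and $x$ computed above can be simultaneously conjugated into $M_2(\mathbb{F}_q)$, by extracting a divisibility condition from the characteristic polynomial of $x$ in the forward direction, and, in the reverse direction, writing $x$ as an $\mathbb{F}_q$-linear combination of $\id$ and $\alpha$ and realizing $\alpha$ as a companion matrix over $\mathbb{F}_q$.

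For the forward direction, I would focus on $x$ alone. In the diagonal form above, $x$ has eigenvalues $-\lambda^{-1}, -\lambda$, giving characteristic polynomial $t^2 + (\lambda + \lambda^{-1}) t + 1$. If some conjugate of $x$ lies in $M_2(\mathbb{F}_q)$, this polynomial must have coefficients in $\mathbb{F}_q$, forcing $\lambda + \lambda^{-1} \in \mathbb{F}_q$ and hence $\lambda \in \mathbb{F}_{q^2}$. Together with $\lambda^p = 1$ and $\lambda = \zeta_1\zeta_2 \neq 1$ (by hypothesis), the element $\lambda$ has order exactly $p$ in $\mathbb{F}_{q^2}^\times$, so $p \mid q^2 - 1$.

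For the reverse direction, suppose $p \mid q^2 - 1$. By Lemma \ref{commutes}, $\alpha$ commutes with $x$; moreover $\alpha$ has two distinct eigenvalues $a$ and $a\lambda^{-1}$ (distinct because $\lambda \neq 1$, and with $a \neq 0$ because $\zeta_2 = -\zeta_1$ would be incompatible with $\zeta_1 \neq \zeta_2$ and $\zeta_i^p = 1$ for odd $p$). Hence $x \in \overline{\mathbb{F}_q}[\alpha]$, and writing $x = c_0\,\id + c_1 \alpha$ and solving the diagonal system yields
\[
c_1 = \frac{\lambda + 1}{a}, \qquad c_0 = -(1 + \lambda + \lambda^{-1}).
\]
The crucial step is to verify $c_0, c_1 \in \mathbb{F}_q$. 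When $p \mid q - 1$, every $p$-th root of unity already lies in $\mathbb{F}_q$, so this is immediate. When $p \mid q + 1$ but $p \nmid q - 1$, Frobenius acts on $p$-th roots of unity by $\zeta \mapsto \zeta^{-1}$, yielding $\lambda^q = \lambda^{-1}$ and $a^q = \zeta_1^{-1} + \zeta_2^{-1} = a/\lambda$; substituting these into the formulas for $c_0$ and $c_1$ shows each is Frobenius-invariant. Now realize $\alpha$ as the companion matrix of its ($\mathbb{F}_q$-rational) characteristic polynomial $t^2 - (a + a\lambda^{-1})\,t + a^2\lambda^{-1}$; then $x = c_0\,\id + c_1 \alpha$ lies in $M_2(\mathbb{F}_q)$ in the same basis.

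The main obstacle I anticipate is the Galois verification in the subcase $p \mid q + 1$: neither $\lambda$ nor $a$ individually lies in $\mathbb{F}_q$, so one must check by a direct Frobenius computation that the specific combinations defining $c_0$ and $c_1$ descend to $\mathbb{F}_q$, leveraging the exact form of the Galois action on the $\zeta_i$ rather than on $\lambda$ and $a$ separately.
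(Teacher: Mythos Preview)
Your argument is correct when $\lambda = \zeta_1\zeta_2 \neq 1$, but this is \emph{not} a hypothesis at this stage of the proof. The only constraint in force is $\zeta_1 \neq \zeta_2$, which still allows $\zeta_2 = \zeta_1^{-1}$, i.e.\ $\lambda = 1$. The paper explicitly treats this case separately, and it must be treated: in the forward direction, if $\lambda = 1$ then $x = -\id$ and its characteristic polynomial $t^2 + 2t + 1$ lies over $\mathbb{F}_q$ regardless of $p$, so looking at $x$ alone yields nothing; one must instead use $\alpha = (\zeta_1 + \zeta_1^{-1})\id$, which is $\mathbb{F}_q$-rational iff $\zeta_1 + \zeta_1^{-1} \in \mathbb{F}_q$ iff $p \mid q^2 - 1$. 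In the reverse direction your companion-matrix trick also breaks down when $\lambda = 1$, since $\alpha$ is then scalar and has no cyclic vector; but of course that case is trivial since both matrices are scalar with entries in $\mathbb{F}_q$ once $p \mid q^2 - 1$.

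For $\lambda \neq 1$ your argument is fine and takes a somewhat different route from the paper in the reverse direction: the paper writes down an explicit conjugating matrix $\psi$ and computes $\psi x \psi^{-1}$ and $\psi\alpha\psi^{-1}$ directly, whereas you observe that $x \in \mathbb{F}_q[\alpha]$ and then realize $\alpha$ via its $\mathbb{F}_q$-rational companion matrix. Your approach is a bit more conceptual and avoids the explicit matrix calculation; the paper's has the advantage of producing a concrete $\gamma$ in that basis, which is used immediately afterward in the group-theoretical criterion. Either way, you need to add the (easy) case analysis for $\lambda = 1$ to close the gap.
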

\begin{proof}
Observe that 
$p | q^2 - 1 \Leftrightarrow |\gal(\mathbb{F}_q [\zeta_p]:\mathbb{F}_q )| \leq 2 
\Leftrightarrow \zeta + \zeta^{-1} \in \mathbb{F}_q$
for each $\zeta$ which is a primitive $p$\textsuperscript{th} root
of unity.  When $\lambda = 1$, $x = -\id$, and 
$\alpha = (\zeta_1 + \zeta_1^{-1})\id$ are central, and are matrices over
$\mathbb{F}_q$ $\Leftrightarrow$ $\zeta_1 + \zeta_1^{-1} \in \mathbb{F}_q$
$\Leftrightarrow$ $p | q^2 - 1$.  When $\lambda \neq 1$, to see the
``only if'', observe that since 
$\lambda + \lambda^{-1} = -\tr x \in \mathbb{F}_q $, 
we know that $p | q^2 - 1$.  To see the ``if'', let
$$\psi=\left(\begin{array}{cc}
\lambda & -1 \\ 
-1 & \lambda
\end{array}\right),$$ 
and observe that
\begin{align}\label{xandalpha}
\psi (\gamma^{-1}\gamma^*) \psi^{-1} &= \left(\begin{array}{cc}
-(\lambda + \lambda^{-1}) & -1\\
1 & 0
\end{array}\right),\\
\psi \alpha \psi^{-1} &= \left(\begin{array}{cc}
(\lambda + \lambda^{-1} + 1)\frac{a}{\lambda+1} & \frac{a}{\lambda + 1} \\
-\frac{a}{\lambda + 1} & \frac{a}{\lambda + 1}
\end{array}\right),\nonumber
\end{align}
both of which are matrices over $\mathbb{F}_q $.
\end{proof}

It is thus clear that if $q^2 - 1$ is not divisible by $p$, there are no
non-pointed categorifications, and if $q^2 - 1$ is divisible by $p$, then
up to grading-equivalence, non-pointed categorifications are determined
by an element of $\CoH^3(\ZZ/p\ZZ, \CC^*)$ together with unordered
pairs $\{\zeta_1, \zeta_2\}$ of distinct $p$\textsuperscript{th}
roots of unity under the equivalence 
$\{\zeta_1, \zeta_2\} \sim \{\zeta_1^{-1}, \zeta_2^{-1}\}$.  
Such pairs determine the pair $(\gamma, \alpha)$ uniquely because they
determine $\gamma$ uniquely, and $\gamma^{-1}\gamma^*$ and $\alpha$
have the same centralizer.

\begin{cl} The category $\cC$ is group-theoretical if, and only if,
$\zeta_1\zeta_2 \in \mathbb{F}_q $.
\end{cl}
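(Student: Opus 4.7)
The plan is to reduce to the existence of an $\mathbf{M}$-invariant Lagrangian in $A\oplus A^*$, invoking the analogue of the Gelaki--Naidu--Nikshych criterion used above in the $p=2$ step: since $\cC$ is a $\ZZ/p\ZZ$-extension of $\Vect_A$ with $p$ coprime to $|A|$, $\cC$ is group-theoretical if and only if $\mathbf{M}=\rho(\cc)$ preserves a Lagrangian subgroup of $A\oplus A^*$ with respect to the standard hyperbolic form. Thus the task is to show that such an invariant Lagrangian exists if and only if $\zeta_1\zeta_2 \in \mathbb{F}_q$.

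The first step is a spectral analysis of $\mathbf{M}$ over $\overline{\mathbb{F}_q}$. By the explicit block-diagonalization carried out earlier in this section, the eigenvalues of $\mathbf{M}$ are $\zeta_1,\zeta_2,\zeta_1^{-1},\zeta_2^{-1}$. Under the standing hypotheses $\zeta_1\neq\zeta_2$, $\zeta_1\zeta_2\neq 1$, and (modulo the mildly degenerate case $\zeta_i = 1$) $\zeta_i^2 \neq 1$, these four roots are pairwise distinct, so $\mathbf{M}$ diagonalizes with one-dimensional eigenspaces $V_{\zeta_1}, V_{\zeta_2}, V_{\zeta_1^{-1}}, V_{\zeta_2^{-1}}$. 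Because $\mathbf{M}$ lies in $O(A\oplus A^*)$, eigenvectors pair trivially unless their eigenvalues multiply to $1$; the only such pairs are $\{\zeta_1,\zeta_1^{-1}\}$ and $\{\zeta_2,\zeta_2^{-1}\}$. Hence the $\mathbf{M}$-invariant Lagrangians over $\overline{\mathbb{F}_q}$ are exactly the four subspaces
\[
L_{\epsilon_1,\epsilon_2} \;=\; V_{\zeta_1^{\epsilon_1}} \oplus V_{\zeta_2^{\epsilon_2}}, \qquad \epsilon_1,\epsilon_2 \in \{\pm 1\}.
\]
An invariant Lagrangian defined over $\mathbb{F}_q$ must, after base change to $\overline{\mathbb{F}_q}$, coincide with one of these four, and must be stable under the Frobenius $\phi: x\mapsto x^q$.

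The second step is a Galois check. Existence of such a category forces $p\mid q^2-1$, so either $p\mid q-1$ or $p\mid q+1$; these are mutually exclusive since $p$ is odd. If $p\mid q-1$, then $\zeta_i\in \mathbb{F}_q$, every $V_\zeta$ is $\phi$-stable, all four $L_{\epsilon_1,\epsilon_2}$ descend to $\mathbb{F}_q$, and trivially $\lambda:=\zeta_1\zeta_2 \in \mathbb{F}_q$. If $p\mid q+1$, then $\phi(\zeta_i)=\zeta_i^q=\zeta_i^{-1}$, so $\phi$ sends $L_{\epsilon_1,\epsilon_2}$ to $L_{-\epsilon_1,-\epsilon_2}$ and acts as two disjoint $2$-cycles on the four candidates with no fixed point; hence no $\mathbf{M}$-invariant Lagrangian is defined over $\mathbb{F}_q$, and moreover $\lambda^q=\lambda^{-1}\neq\lambda$ since $\lambda\neq 1$ is a $p$-th root of unity with $p$ odd, so $\lambda\notin \mathbb{F}_q$. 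Combining the two cases yields the equivalence.

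The main obstacle is justifying the group-theoretical criterion in the presence of the cocycle datum $\xi$, and handling the mildly degenerate case $\zeta_1=1$ (or $\zeta_2=1$), where some eigenspace becomes two-dimensional; in that situation one replaces the enumeration of the four $L_{\epsilon_1,\epsilon_2}$ by a direct enumeration of $\mathbf{M}$-invariant isotropic $2$-planes, and the Galois descent argument goes through with only cosmetic changes.
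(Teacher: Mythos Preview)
Your approach via Galois descent on the $\overline{\mathbb{F}_q}$-eigenlines of $\mathbf{M}$ is genuinely different from the paper's: the paper works entirely over $\mathbb{F}_q$, fixes an explicit matrix form for $\mathbf{M}$, and does a case analysis on $\dim(\pi L)$ for a putative invariant Lagrangian $L$, with an explicit construction when $\lambda\in\mathbb{F}_q$ and an explicit obstruction computation otherwise. Your spectral/descent argument is cleaner in the generic case and makes the role of the Frobenius involution $\zeta\mapsto\zeta^{-1}$ transparent.

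However, there is a real gap. You assert that $\zeta_1\zeta_2\neq 1$ is a ``standing hypothesis,'' but at this point in the argument it is not: the Claim is precisely what is used afterwards to \emph{derive} that the non-group-theoretical categories are those with $\zeta_1\zeta_2\neq 1$. The case $\lambda=\zeta_1\zeta_2=1$ (with $\zeta_1\neq\zeta_2$, hence $\zeta_2=\zeta_1^{-1}$ and $\zeta_1\neq 1$) is therefore live, and it is exactly the interesting one when $p\mid q+1$: here $\lambda=1\in\mathbb{F}_q$, so the Claim asserts that $\cC$ \emph{is} group-theoretical. Your argument does not cover it. In this case the eigenvalues of $\mathbf{M}$ are $\zeta_1,\zeta_1,\zeta_1^{-1},\zeta_1^{-1}$, the four one-dimensional Lagrangians $L_{\epsilon_1,\epsilon_2}$ are not well-defined, and the $\mathbf{M}$-invariant Lagrangians over $\overline{\mathbb{F}_q}$ form a $\mathbb{P}^1$-family (a line in $V_{\zeta_1}$ plus its orthogonal in $V_{\zeta_1^{-1}}$) together with $V_{\zeta_1}$ and $V_{\zeta_1^{-1}}$. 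Since Frobenius swaps $V_{\zeta_1}$ and $V_{\zeta_1^{-1}}$, neither of those descends, and you must actually exhibit a Frobenius-fixed point in the $\mathbb{P}^1$-family; this is not automatic and is the substantive content of the ``if'' direction here. The paper handles it by writing down an explicit $\mathbb{F}_q$-rational invariant Lagrangian whenever $\lambda\in\mathbb{F}_q$ (its Case~2 construction). You flagged only the degeneracy $\zeta_i=1$, but the degeneracy $\zeta_1\zeta_2=1$ is separate and must be treated as well.
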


\begin{proof}
The resulting category is group-theoretical if, and only if, there exists
a Lagrangian subspace $L \subset A \oplus A^*$, with respect to the split
quadratic form $q(a \oplus b) = ba$, which is invariant under the action
of $\ZZ/p\ZZ$.

Fix the homomorphism $\rho$.  Write $\mathbf{M}=\rho(\cc)$, and let
$\alpha$ and $\gamma$ be as in (\ref{qpower}).  Denote the chosen basis
of $A$ by $e_1, e_2$.  This gives a basis $e_1, e_2, e_1^*, e_2^*$ for
$A \oplus A^*$.  Because $\gamma^{-1}\gamma^*$ and $\alpha$ have the
same centralizer, Lemma \ref{uniquegamma} and equation \ref{xandalpha}
imply that we may assume
$$\gamma = \left(\begin{array}{cc}
-1 & -(\lambda + \lambda^{-1} + 1) \\
1 & -1 
\end{array}\right).$$  
Thus,
\begin{equation}\label{Meqn}
\mathbf{M} = \left(\begin{array}{cccc}
\frac{a(\lambda^2 + \lambda + 1)}{\lambda^{2} + \lambda} & \frac{a}{\lambda + 1} & -\frac{\lambda}{(\lambda+1)^2} &  -\frac{\lambda}{(\lambda+1)^2} \\
-\frac{a}{\lambda + 1} & \frac{a}{\lambda + 1} & \frac{\lambda^2+\lambda+1}{(\lambda + 1)^2} & -\frac{\lambda}{(\lambda+1)^2} \\
-1 & -\frac{(\lambda^2+\lambda+1)}{\lambda} & 0 & 0 \\
1 & -1 & 0 & 0
\end{array}\right).\end{equation}
For any element or subspace $a$ of $A \oplus A^*$, denote by $\pi a$
the projection of $a$ onto $A$.  Fix some Lagrangian subspace $L$.
We consider three cases.

\paragraph{\boldmath Case 1: $\pi L$ has dimension $0$.}
It follows that $L = A^*$, and by inspection,
such Lagrangian subspaces are never invariant subspaces
of the action of $\ZZ/p\ZZ$ by the homomorphism $\rho$.

\paragraph{\boldmath Case 2: $\pi L$ has dimension $1$.}
In this case, we prove that there is an invariant Lagrangian subspace
if, and only if, $\lambda \in \mathbb{F}_q $.

First we will show the ``only if''.  Since $\pi L$ has dimension $1$,
there is some vector $v \neq 0 \in \pi L$.  We claim that $v \in L$. To see
this, let $v'$ be a lift of $v$ to $L$. Write $v' = v + w$.  Since 
$v + w \in L$, it suffices to show that $w \in L$.  Take $w' \in L$, such
that $w' \notin \langle v' \rangle$.  Since $\pi L$ has dimension $1$,
we have that $\pi w' = \lambda v$.  Consider the vector $w' - \lambda v'$.  
We have $\pi(w' - \lambda v') = 0$, so $w' - \lambda v' \in A^*$.
Since $w' - \lambda v' \in L$, we have that
\begin{align*}
0 &= q((w' - \lambda v') + v') \\
&= q((w' - \lambda v') + v + w) \\
&= ((w' - \lambda v') + w)(v) \\
&= (w' - \lambda v')(v), \\
\end{align*}
since $w(v) = q(v') = 0$. But because $w(v) = 0$,
$w \in \langle w' - \lambda v' \rangle \subset L$, since the subspace of $A^*$
which evaluates to $0$ on a non-zero vector in $A$ is one-dimensional.
Therefore, $v \in L$.
It follows that $\mathbf{M}v \in L$, and therefore that
$\alpha v = \pi(\mathbf{M}v) \in \pi L = \langle v \rangle$.
Thus, $v$ is an eigenvector of $\alpha$.  It follows that an eigenvalue of
$\alpha$ lies in $\mathbb{F}_q $.  Since the eigenvalues of $\alpha$ are
$-\lambda$ and $-\lambda^{-1}$, we have that $\lambda \in \mathbb{F}_q$.

In order to see the ``if'', consider $L = \langle v, w \rangle$, where
\begin{align*}
v &= (1, -\lambda, 0, 0), \\
w &= (0, 0, \lambda, 1).
\end{align*}
Since $v \in A, w \in A^*$, and $w(v) = 0$, it is clear that $L$ is a
Lagrangian subspace.  Thus, it suffices to show that $L$ is invariant
under the action of $\ZZ/p\ZZ$, or that 
$\mathbf{M}v, \mathbf{M}w \in \langle v, w \rangle$.  By (\ref{Meqn}),
\begin{align*}
\mathbf{M}v = \frac{a}{\lambda} v + (\lambda + 1)w, \textrm{ and}\quad
\mathbf{M}w = -\frac{1}{\lambda^{-1} + 1}v.
\end{align*}

\paragraph{\boldmath Case 3: $\pi L$ has dimension $2$.}
In this case, we prove that there is no invariant Lagrangian subspace if
$\lambda \notin \mathbb{F}_q$.  Assume to the contrary.  Since $\pi L$
has dimension $2$, $e_1, e_2 \in \pi L$.  Let $v$ be an arbitrary lift
of $e_1$ to $L$, and $w$ be an arbitrary lift of $e_2$ to $L$.  Clearly,
we have $L = \langle v, w \rangle$.  Since $q(v) = 0$, and $\pi v = e_1$,
$v$ must have the form $(1, 0, 0, s)$ for some $s \in \mathbb{F}_q$.
Similarly, $w$ must have the form $(0, 1, s', 0)$.  Since $q(v+w) = 0$,
we have that $s' = -s$.  In other words, our Lagrangian subspace would
have to be the span of two vectors in the form 
$v = (1, 0,  0, s)$, 
$w = (0, 1, -s, 0)$.  
We have $\mathbf{M}v \in \langle v, w \rangle$. Now,
we explicitly compute $\mathbf{M}v=(\kappa,\tau,-1,1)$, where
\begin{align*}
\kappa &= \frac{a(\lambda+1)(\lambda^2+\lambda+1)- s \lambda^2}{\lambda(\lambda+1)^2}, \\
\tau &= -\frac{s \lambda + a(\lambda+1)}{(\lambda+1)^2}.
\end{align*}
Since $\mathbf{M}v \in \langle v, w \rangle$, we have that there exists
$c_v, c_w \in \mathbb{F}_q $ such that $c_vv + c_ww - \mathbf{M}v = 0$.  Since 
$\pi(c_vv + c_ww) = \left(\begin{smallmatrix}  
c_v \\ 
c_w 
\end{smallmatrix}\right)$, 
we know $c_v=\kappa, c_w=\tau$.
Thus, $0=\kappa v + \tau w - \mathbf{M}v=(0,0,1-\tau s, \kappa s -1)$. As
$$1- \tau s =  \frac{(s \zeta_{2} + \zeta_{1} \zeta_{2} + 1)(s \zeta_{1} + \zeta_{1} \zeta_{2} + 1)}{\zeta_1^2 \zeta_2^2 + 2 \zeta_1 \zeta_2 + 1},$$ 
we deduce
$$(s \zeta_2 + \zeta_1 \zeta_2 + 1) (s \zeta_1 + \zeta_1 \zeta_2 + 1) = 0.$$
Without loss of generality we may assume $s = -\zeta_1 -\zeta_2^{-1}$. Then, 
$$0=\kappa s - 1= -\frac{(\zeta_1 + \zeta_2)(\zeta_1 \zeta_2 + 1)^2}{\zeta_1 \zeta_2^2}.$$
But, this is impossible, as $\zeta_1, \zeta_2$ are $p$\textsuperscript{th}
roots of unity, $p$ is odd, $\zeta_1 \zeta_2 \notin \mathbb{F}_q$, and
$\zeta_1 \neq \zeta_2$.  
(The last two assumptions are needed only when $q = 2$.)
\end{proof}

At this point, we can count the number of non-group-theoretical
categories of dimension $pq^2$ up to grading-equivalence, and up
to general equivalence.  If $p$ does not divide $q^2 - 1$, then
all categorifications are pointed. If $p$ divides $q -1$, then all
$p$\textsuperscript{th} roots of unity lie in $\mathbb{F}_q$.  Therefore,
non-group-theoretical categorifications occur only when $p$ divides $q  + 1$; 
we have one such categorification for each pair $\{\zeta_1,\zeta_2\}$
such that $\zeta_1\zeta_2 \neq 1$.

To account for grading-equivalences, we first compute the number
$\frac{(p-1)(p-3)}{4}$ of pairs $\{\zeta_1,\zeta_2\}$ with $\zeta_i\neq 1$
up to equivalence $\{\zeta_1,\zeta_2\}\sim\{\zeta_1^{-1},\zeta_2^{-1}\}$.
To this we add the number $\frac{p-1}{2}$ of pairs $\{1,\zeta\}$ up to
equivalence $\{1,\zeta\}\sim\{1,\zeta^{-1}\}$, for a total of 
$\frac{(p - 1)^2}{4}$ 
non-group-theoretical categories up to grading-equivalence.

To account for general equivalences, consider the action of 
$\Aut \ZZ/p\ZZ$ on our categories. An element $g \in (\Aut \ZZ/p\ZZ)$
acts by multiplication by $g^{-2}$ on $H^3(\ZZ/p \ZZ,\CC^*)$, and
sends $(\zeta_1, \zeta_2) \to (\zeta_1^g, \zeta_2^g)$.  There are
three orbits on $H^3(\ZZ/p \ZZ,\CC^*)$ under this action: the quadratic
non-residues, the quadratic residues, and $0$ in an orbit by itself.
The stabilizer of the first two orbits is $\pm 1$, which then acts on the
pairs $\{\zeta_1,\zeta_2\}$ as in the graded case, giving 
$\frac{(p - 1)^2}{4}$ 
categorifications each, for $\frac{(p - 1)^2}{2}$ together.  The $\{0\}$
orbit in $H^3(\ZZ/p\ZZ,\CC^*)$ yields two types of orbits on the set of
pairs $\{\zeta_1,\zeta_2\}$.  Clearly $\Aut \ZZ/p{Z}$ acts transitively
on pairs $\{1, \zeta\}$.  So suppose that $\zeta_2 = \zeta_1^k$. Then
the set $\{k, k^{-1}\}$ determines the orbit, and the number of such
sets which do not contain $0, \pm 1$ is $\frac{p-3}{2}$.  This gives a
total of $\frac{p^2 - p}{2}$ categorifications.

\section{Proof of Corollary \ref{Hopf}}\label{Hopfpf}
By Lemma \ref{Zpgrading}, all categories of dimension $pq^2$ without
a faithful $\ZZ/p\ZZ$-grading are group-theoretical.  Let us suppose
that $\cC$ of dimension $pq^2$ is faithfully $\ZZ/p\ZZ$-graded, and is
the category of representations of some semi-simple Hopf algebra $H$,
and demonstrate that $\cC$ is group-theoretical.

The faithful $\ZZ/p\ZZ$ grading on $\cC$ induces a faithful
$\ZZ/p\ZZ$-grading on $H^*$ as follows.  Since $\cC$ is faithfully
$\ZZ/p\ZZ$-graded, there exists a central group-like element $c\in H$,
such that $c^p=1$, defining the grading.  This element defines the
decomposition $H^*=\bigoplus_kH^*_k$, where 
\mbox{$H^*_k = \{f \in H^*\textrm{ s.t. } f(cx)=\zeta^kf(x)\}$} 
and $\zeta = e^{\frac{2\pi i}{p}}$. Clearly $H^*_k\neq 0$ for all $k$.

We consider the sub-algebra $H^*_0$ of $H^*$, and we let $\cD$ denote
the category of $H^*_0$-bimodules in $\cC$; $\cD$ is Morita equivalent
to $\cC$.  Because $H^*_0\in \cC_0$, $\cD$ is also $\ZZ/p\ZZ$-graded,
and we have $|\cD_0|=q^2$, so $\cD_0$ is pointed.  Furthermore, $H^*$
is an algebra in $\cD$, whose $0$-component is $H^*_0$, the unit in $\cD$.

\begin{cl} 
The multiplication map 
$\displaystyle{\mu:H^*_k\ot_{H^*_0}H^*_l \to H^*_{k+l}}$ 
is an isomorphism for all $k$ and $l$.
\end{cl}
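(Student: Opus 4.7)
The plan is to exhibit $\mu$ as a morphism between two simple invertible objects of $\cD_{k+l}$ and then to show it is nonzero, hence an isomorphism.

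First I would compute Frobenius--Perron dimensions. Since $H$ is semisimple, $H \cong H^*$ as left $H$-modules, so the faithful $\ZZ/p\ZZ$-grading of $\cC$ gives $H^* = \bigoplus_k H^*_k$ with each $\FPdim_\cC(H^*_k) = |\cC_k| = q^2$.  Under the Morita equivalence $\cD \simeq \cC$, the forgetful functor $\cD \to \cC$ rescales Frobenius--Perron dimensions by $\FPdim_\cC(H^*_0) = q^2$, so $\FPdim_\cD(H^*_k) = 1$ for every $k$; each $H^*_k$ is invertible in $\cD_k$. Consequently both $H^*_k \otimes_{H^*_0} H^*_l$ and $H^*_{k+l}$ are simple invertible objects of $\cD_{k+l}$, and any morphism between them is either zero or an isomorphism.

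Next I would show $\mu_{k,-k} \ne 0$ by a Frobenius-form argument. Semisimplicity makes $H^*$ a symmetric Frobenius algebra with trace form $\tau(f) = \tr(L_f)$, where $L_f$ denotes left multiplication on $H^*$. For $f \in H^*_k$ with $k \ne 0$, $L_f$ shifts the grading by $k$ and hence has zero diagonal in a graded basis, so $\tau$ is supported on $H^*_0$. The nondegenerate pairing $\langle f, g \rangle = \tau(fg)$ therefore pairs $H^*_k$ with $H^*_{-k}$ nondegenerately, forcing $\mu_{k,-k} \ne 0$, so $\mu_{k,-k}$ is an isomorphism. To extend to arbitrary $(k,l)$ I would use associativity: if $\mu_{k,l} = 0$, then applying the associativity constraint to $H^*_{-k} \otimes_{H^*_0} H^*_k \otimes_{H^*_0} H^*_l$ equates the zero map $\mu_{-k,k+l} \circ (\id \otimes \mu_{k,l})$ with $\mu_{0,l} \circ (\mu_{-k,k} \otimes \id)$, a composite of isomorphisms (the unit constraint and the iso just established), which is nonzero --- a contradiction.

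The main obstacle is the middle step: the ``strong grading'' condition $H^*_k \cdot H^*_{-k} \ne 0$ can easily fail for an arbitrary $\ZZ/p\ZZ$-graded algebra with trivial component equal to the unit, so it is essential to invoke that $H^*$ is semisimple (equivalently, $H$ is semisimple as a Hopf algebra), which yields the nondegenerate, grading-compatible Frobenius trace form needed to rule this out.
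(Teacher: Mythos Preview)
Your argument is correct and takes a genuinely different route from the paper's.

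The paper argues Hopf-algebraically: it first shows that the map $\Delta_l=(\pi_l\otimes\id)\circ\Delta: H\to H/(c-\zeta^l)\otimes H$ is injective (because for any $V\in\cC_l$ the module $V\otimes H$ is free, so no nonzero $a\in H$ can annihilate it), dualizes to conclude that $\mu: H^*\otimes_{H^*_0}H^*_l\to H^*$ is \emph{surjective}, and then invokes the Nichols--Zoeller freeness theorem to match vector-space dimensions and upgrade surjectivity to an isomorphism; restricting to graded pieces gives the claim. In particular, invertibility of the $H^*_k$ in $\cD$ is deduced only \emph{after} the claim is proved.

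You reverse the logic: you first use the Frobenius--Perron rescaling identity for bimodule categories, $\FPdim_\cD(M)=\FPdim_\cC(M)/\FPdim_\cC(H^*_0)$, to see directly that each $H^*_k$ is invertible in $\cD$, then use Schur's lemma together with the nondegenerate symmetric trace form on the semisimple algebra $H^*$ to show $\mu_{k,-k}\neq 0$, and finally propagate to all $(k,l)$ by associativity. This trades Nichols--Zoeller for the (equally classical) Frobenius property of semisimple algebras. Your route is more categorical and would transport to graded-algebra situations without an underlying Hopf structure; the paper's route is more self-contained at the Hopf-algebra level and yields surjectivity of $\mu$ as an explicit intermediate step. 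One small point: the FP-dimension rescaling you invoke is standard (it follows from Perron--Frobenius applied to the two commuting actions of $\cC$ and $\cD$ on the module category of right $H^*_0$-modules, using that $H^*_0$ is haploid), but it is worth a one-line justification or a reference rather than a bare assertion.
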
 
\begin{proof}
We claim first that the map
$$\Delta_l=(\pi_l\ot \id)\circ \Delta: H\to H/(c-\zeta^l)\ot H$$
is injective.  Indeed, suppose $a\in H$ s.t. $\Delta_l(a)=0$.  Then for
all $V\in \cC_l,U \in \cC,$ we have that $a|_{V\ot U}=0$.  Taking $U=H$,
we have $V\ot H \cong (\dim V) H$, so $a$ must be zero.  By duality,
\begin{equation}\label{mueqn}
\mu: H^*\ot_{H^*_0} H^*_l\to H^*
\end{equation} 
is surjective.  By the Nichols-Zoeller theorem \cite{NZ}, $H^*$ is free
over $H^*_0$, of rank $p$.  Therefore, the left hand side of (\ref{mueqn})
has dimension $p\cdot\dim H^*_l=\dim H^*$. Thus (\ref{mueqn}) is an
isomorphism.  Restricting to the graded components yields the claim.
\end{proof}
The claim implies that each $H^*_k$ is an invertible object in $\cD$,
and so in particular there are invertible objects in each $\cD_k$, in
addition to the $q^2$ invertible objects in $\cD_0$.  As the number of
invertible objects must divide the overall dimension $pq^2$ of $\cD$,
we conclude that $\cD$ is pointed.

\section{\boldmath Categorifications of $R_{3, A}$}\label{R3A}

\begin{lm} \label{VectA} 
If $|A|$ is coprime to $3$, $\ZZ/3\ZZ$-graded categories $\cC$ with
trivial component $\Vect_{A}$ such that $\cC_\cc$ contains a single
simple object, are all categorifications of $R_{3,A}$, and are, up
to grading-equivalence, parameterized by pairs $(\xi, \gamma)$, where
$\xi$ is an element of $\CoH^3(\ZZ/3\ZZ, \CC^*)$ $\simeq \ZZ/3\ZZ$, and
$\gamma$ is a map $A \to A^*$ such that $\gamma^*\gamma^{-1}\gamma^*$ is
skew-symmetric.  If our equivalence is not required to preserve grading,
we must additionally identify $\gamma$ with $\gamma^*$.
\end{lm}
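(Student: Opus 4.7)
The plan is to specialize Theorem~\ref{deltazero} to $p = 3$ and extract the constraint $\mathbf{M}^3 = \id$. By that theorem, such graded categories are classified up to grading-equivalence by triples $(\xi, \alpha, \gamma)$ where $\xi \in \CoH^3(\ZZ/3\ZZ, \CC^*)$, $\gamma:A\to A^*$ is an isomorphism, and $\alpha: A \to A$ satisfies that $\gamma^*\alpha$ is skew-symmetric together with $\mathbf{M}^3 = \id$ for $\mathbf{M} = \bigl(\begin{smallmatrix} \alpha & \id \\ x^{-1} & 0 \end{smallmatrix}\bigr)$ and $x = \gamma^{-1}\gamma^*$. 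The goal is to show that $\alpha$ is forced to equal $x$, so it is redundant as data, and that the remaining condition on $\gamma$ reduces exactly to skew-symmetry of $\gamma^*\gamma^{-1}\gamma^*$.

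First I would compute the relevant blocks of $\mathbf{M}^3$. The bottom-right block is $x^{-1}\alpha$ and the top-right block is $\alpha^2 + x^{-1}$; setting $\mathbf{M}^3 = \id$ yields $\alpha = x = \gamma^{-1}\gamma^*$, and then substituting gives $x^3 = -\id$. With $\alpha = x$, the skew-symmetry of $\gamma^*\alpha$ becomes exactly skew-symmetry of $\gamma^*\gamma^{-1}\gamma^*$, as stated. Next I would argue that the condition $x^3 = -\id$ is actually automatic: dualizing the relation $(\gamma^*\gamma^{-1}\gamma^*)^* = -\gamma^*\gamma^{-1}\gamma^*$ and using $(\gamma^{-1})^* = (\gamma^*)^{-1}$ yields $\gamma(\gamma^*)^{-1}\gamma = -\gamma^*\gamma^{-1}\gamma^*$, and left-multiplication by $\gamma^{-1}$ rearranges this to $x^{-1} = -x^2$.

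To conclude that every such pair $(\xi, \gamma)$ actually yields a categorification of $R_{3, A}$ (not some other fusion ring), I would verify that $\cC_{2\cc}$ also contains a unique simple object. The top-right block of $\mathbf{M}^2$ is $\alpha = x$, which is invertible, so by the criterion in Proposition~\ref{catRpA} the component $\cC_{2\cc}$ has a single simple object of dimension $\sqrt{|A|}$. The fusion rules are then forced to be those of $R_{3,A}$.

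For the equivalences: grading-preserving equivalences from Theorem~\ref{deltazero} act by $\gamma \mapsto \psi^*\gamma\psi$, and since $\alpha$ is determined by $\gamma$ no further identification on $\alpha$ is needed. For general equivalences we must additionally quotient by $\Aut(\ZZ/3\ZZ) \cong \ZZ/2\ZZ$, whose nontrivial element sends $\rho \to \rho^{-1}$, i.e.\ sends the orthogonal matrix $\mathbf{N} = \bigl(\begin{smallmatrix} \alpha & (\gamma^*)^{-1} \\ \gamma & 0 \end{smallmatrix}\bigr)$ to $\mathbf{N}^{-1} = J\mathbf{N}^T J$, where $J = \bigl(\begin{smallmatrix} 0 & \id \\ \id & 0 \end{smallmatrix}\bigr)$. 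After this inversion I would reapply the reduction of the Claim in Theorem~\ref{deltazero} (conjugating by a block matrix $\bigl(\begin{smallmatrix}\id&0\\\varphi&\id\end{smallmatrix}\bigr)$) to return the matrix to the semi-normal form, and read off that the new $\gamma$ is $\gamma^*$. The main obstacle is this last computation, which requires careful bookkeeping with the duality conventions to confirm that the reduction indeed lands on $\gamma^*$ rather than, say, $(\gamma^*)^{-1}$ or a twist of it; the earlier block-matrix steps are routine, and are simplified by the observation that $\alpha = x$ trivially commutes with itself, so no analogue of Lemma~\ref{commutes} for arbitrary $A$ is needed.
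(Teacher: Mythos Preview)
Your proposal is correct and follows essentially the same route as the paper: specialize Theorem~\ref{deltazero}, read off $\alpha = x$ from the bottom-right block of $\mathbf{M}^3$, reduce the data to $(\xi,\gamma)$ with $\gamma^*\gamma^{-1}\gamma^*$ skew-symmetric, and handle the $\Aut(\ZZ/3\ZZ)$-action by inverting $\mathbf{M}$ and reapplying the lower-triangular conjugation to read off $\gamma\mapsto\gamma^*$. The one place you diverge is in showing that $\cC_{2\cc}$ also has a single simple object: you compute the top-right block of $\mathbf{M}^2$ directly and find it equals $\alpha=x$, whereas the paper argues more structurally that for $\rho$ into the split orthogonal group the upper-right block of $\rho(g^{-1})$ is the dual of that of $\rho(g)$, hence one is invertible iff the other is. Both arguments are short; the paper's has the advantage of working for any $p$, while yours is more concrete here. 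Your explicit verification that skew-symmetry of $\gamma^*\gamma^{-1}\gamma^*$ already forces $x^3=-\id$ is a detail the paper only asserts, and the final conjugation you flag as ``the main obstacle'' is carried out in the paper with $\varphi=\gamma^*\gamma^{-1}\gamma^*$, landing exactly on $\gamma^*$ as you anticipate.
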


\begin{proof}
Clearly, categorifications of $R_{3,A}$ are $\ZZ/3\ZZ$-graded categories
$\cC$ with trivial component $\Vect_{A}$ such that $\cC_\cc$ contains
a single simple object.  To see the reverse inclusion, recall that
$\cC_g$ is contains a unique simple object if, and only if, $\rho(g)$
has its upper right entry an isomorphism. Thus, it suffices to show
that $\rho(g)$ has its upper right entry an isomorphism if, and only if,
$\rho(g^{-1})$ does.  But this is clear because $\rho$ is a homomorphism
into the split orthogonal group, so the upper right entry of $\rho(g)$
is the dual of the upper right entry of  $\rho(g^{-1})$.  Therefore, by
Theorem \ref{deltazero}, $\cC$ is determined up to grading-equivalence by
an element of $\CoH^3(\ZZ/3\ZZ, \CC^*) \simeq \ZZ/3\ZZ$, together with a
map $\alpha: A \to A$, and an isomorphism $\gamma: A \to A^*$, satisfying
the relations $\gamma^*\alpha$ is skew-symmetric and (\ref{qpower}).
Write $x$ for $\gamma^{-1}\gamma^*$. To solve equation (\ref{qpower}),
we explicitly compute:
$$\left(\begin{array}{cc}
\id & 0 \\
0 & \id
\end{array}\right) = \left(\begin{array}{cc}
\alpha & \id \\
x^{-1} & 0
\end{array}\right)^3 = \left(\begin{array}{cc}
\alpha^3 + x^{-1}\alpha + \alpha x^{-1} & \alpha^2 + x^{-1} \\
x^{-1}\alpha^2  + x^{-2} & x^{-1} \alpha
\end{array}\right).
$$
In particular, $\alpha = x$. The condition that $\gamma^*\alpha$
is skew-symmetric then becomes that $\gamma^*\gamma^{-1}\gamma^*$ is
skew-symmetric, from which it follows that 
$(\gamma^{-1}\gamma^*)^3 = -\id$.  
When $\alpha = x$, and $x^3 = -\id$, it is not hard to check
that (\ref{qpower}) is satisfied.

In other words, the conditions on $(\alpha, \gamma)$ given by Theorem
\ref{deltazero} are equivalent to $\alpha = \gamma^{-1}\gamma^*$, and
$\gamma^*\gamma^{-1}\gamma^*$ skew-symmetric.  Therefore, the choice of
maps $\alpha$ and $\gamma$ is equivalent to the choice of a single map
$\gamma$ such that $\gamma^*\gamma^{-1}\gamma^*$ is skew-symmetric.

Finally, in the case where we do not require that our equivalence
preserves grading, we must figure out what happens under the action of
$\Aut \ZZ/3\ZZ$.  In order to do this, we must consider what happens to
$\gamma$ under the transformation 
$\mathbf{\mathbf{M}} \to \mathbf{M}^{-1} = \mathbf{M}^*$. In our case,
$$\mathbf{M} = \left(
\begin{array}{cc}
\gamma^{-1}\gamma^* & {\gamma^*}^{-1} \\
\gamma & 0
\end{array}
\right), \mbox{\quad}
\mathbf{M}^* = \left(\begin{array}{cc}
0 & \gamma^{-1} \\
\gamma^* & \gamma\gamma^{-1*}
\end{array}\right).$$
We find
\begin{align*}
\mathbf{M}^* &\sim \left(\begin{array}{cc}
1 & 0 \\
\gamma^*\gamma^{-1}\gamma^* & 1
\end{array}\right) \left(\begin{array}{cc}
0 & \gamma^{-1} \\
\gamma^* & \gamma\gamma^{-1*}
\end{array}\right) \left(\begin{array}{cc}
1 & 0 \\
-\gamma^*\gamma^{-1}\gamma^* & 1
\end{array}\right) \\ &= \left(\begin{array}{cc}
\gamma^{-1*}\gamma & \gamma^{-1} \\
\gamma^* & 0
\end{array}\right),
\end{align*}
which is the same as the matrix $\mathbf{M}$ with $\gamma$ replaced with $\gamma^*$.
\end{proof}

\begin{lm}\label{q-gp} 
Let $q \neq 3$, $A$ be an abelian $q$-group, and $\gamma$ a
non-degenerate map $A \to A^*$ such that $\gamma^*\gamma^{-1}\gamma^*$
is skew-symmetric. Then $A$ may be decomposed as 
$\bigoplus_i{(C_i \oplus C_i)}$, for cyclic groups $C_i$, 
where the $C_i \oplus C_i$ are mutually orthogonal 
with respect to $\gamma$, and on each component $C_i \oplus C_i$, 
either $\gamma$ is skew-symmetric, or 
$(\gamma^{-1}\gamma^*)^2 = \gamma^{-1}\gamma^* - \id$.
\end{lm}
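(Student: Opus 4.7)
The plan is to exploit the operator $x := \gamma^{-1}\gamma^*$. As in the proof of Lemma \ref{VectA}, the skew-symmetry of $\gamma^*\gamma^{-1}\gamma^* = \gamma^* x$ is equivalent to $x^3 = -\id$, so $x$ is annihilated by $t^3+1 = (t+1)(t^2-t+1)$. The value of $t^2-t+1$ at $t=-1$ is $3$, a unit in $\ZZ_q$ since $q \neq 3$, so the two factors are coprime in $\ZZ_q[t]$. A Bezout identity applied to $x$ yields a direct-sum decomposition $A = A_1 \oplus A_2$ with $A_1 := \Ker(x+\id)$ and $A_2 := \Ker(x^2-x+\id)$, both $x$-stable.

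I would next check that $A_1$ and $A_2$ are $\gamma$-orthogonal. Writing $B(u,v) := \gamma(u)(v)$, the identity $\gamma^* = \gamma x$ translates into $B(a,b) = B(xb, a)$. For $u \in A_1$ and $v \in A_2$, using $xu = -u$ together with $x^2v = xv - v$ and substituting into this identity in two different ways yields simultaneously $B(u, xv) = 2B(u,v)$ and $B(u, xv) = -B(u,v)$, hence $3B(u, v) = 0$; since $3$ is a unit in $\ZZ_q$, we conclude $B(u, v) = 0$. On $A_1$ the equation $x = -\id$ gives $\gamma^* = -\gamma$, so $\gamma$ restricts to a skew-symmetric form; on $A_2$ the equation $x^2 - x + \id = 0$ is precisely the second alternative in the lemma's statement.

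It remains to decompose each of $A_1$ and $A_2$ into blocks $C_i \oplus C_i$ that are pairwise orthogonal under $\gamma$. For $A_1$, I would invoke the classical orthogonal decomposition of a non-degenerate skew-symmetric form on a finite abelian $q$-group into blocks of rank two with equal-order cyclic summands. For $A_2$, I would give it the structure of a module over the ring $R := \ZZ_q[t]/(t^2 - t + 1)$; since $q \neq 3$ its discriminant is a unit, so $R$ is a finite \'etale $\ZZ_q$-algebra, either the unramified quadratic extension $\mathcal{O}$ of $\ZZ_q$ (if $t^2-t+1$ is irreducible mod $q$) or the split algebra $\ZZ_q \times \ZZ_q$ (if it factors). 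In either case $R$ is a product of discrete valuation rings, so $A_2$ decomposes as a direct sum of cyclic $R$-modules, and each such cyclic module is isomorphic as an abelian group to $(\ZZ/q^n\ZZ)^2$, i.e., to a block $C \oplus C$.

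The main technical obstacle is making these cyclic $R$-summands mutually $\gamma$-orthogonal. I would handle this by a Witt-style induction: pick a generator $e$ of a cyclic $R$-submodule of maximal $\ZZ_q$-length; verify (a finite local-ring computation, slightly different in the split and inert cases) that $B$ restricts to a non-degenerate form on $Re$; and split off the $B$-orthogonal complement, which is automatically $R$-stable because $x$ preserves the form (one checks directly that $B(xa, xb) = B(a, b)$ from the identity $B(a,b) = B(xb,a)$). Iterating produces the required orthogonal decomposition, and the same maximal-element induction handles $A_1$ uniformly, including the delicate $q=2$ case, where skew-symmetric forms need not be alternating but the $C \oplus C$ block decomposition still goes through.
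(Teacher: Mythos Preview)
Your proposal follows essentially the same route as the paper's proof: both set $x=\gamma^{-1}\gamma^*$, use $x^3=-\id$ and the factorization $t^3+1=(t+1)(t^2-t+1)$ together with $q\neq 3$ to split $A=A'\oplus A''$ (the paper writes $A'=\Ker(x+\id)$, $A''=\im(x+\id)$ and checks $\Ker(x+\id)=\Ker(x+\id)^2$, which is the same as your Bezout argument), verify orthogonality of the two pieces, and then run a Witt-type induction on each piece peeling off rank-two blocks $\langle g,xg\rangle$ together with their $\gamma$-orthogonal complements. Your $R=\ZZ_q[t]/(t^2-t+1)$-module language for $A_2$ is a pleasant repackaging of the same induction --- the paper's block $\langle g,xg\rangle$ is exactly your cyclic $R$-submodule $Rg$ --- but it does not change the argument.

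One small slip: in the split case $R\cong\ZZ_q\times\ZZ_q$, an arbitrary cyclic $R$-module has the form $\ZZ/q^m\ZZ\times\ZZ/q^n\ZZ$ with $m$ and $n$ possibly different, so it is not automatically of the shape $C\oplus C$. This does not break your argument, because the Witt induction (not the abstract structure theorem) is what actually produces the blocks: once you pick $e$ of maximal order with $q^{n-1}B(e,e)\neq 0$ --- which is exactly the paper's key step, and is what forces the two ``eigencomponents'' of $e$ to have equal order --- the block $Re$ is non-degenerate and isomorphic to $(\ZZ/q^n\ZZ)^2$. So your step~(a) is superfluous and mildly misstated; step~(b) alone, carried out as in the paper, suffices.
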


\begin{proof}
Write $x = \gamma^{-1}\gamma^*$.
Since $\gamma^*\gamma^{-1}\gamma^*$ is skew-symmetric,
it follows that $x^3 = -\id$.
Write $A' = \Ker (x + \id), A'' = \im ( x + \id)$.

First, we claim that $\Ker (x + \id) = \Ker (x + \id)^2$. To see this,
observe that if $(x + \id)^2 g = 0$, then since $x^3 = -\id$,
$0 = (x - 2\id)(x + \id)^2 g = (x^3 - 3x - 2\id) g = -3(x+\id) g$.
Since $q  \neq 3$, multiplication by $-3$ is invertible on $A$,
and therefore, $(x+\id) g = 0$.

It follows that $A = A' \oplus A''$.
Obviously, $x$ restricts to each component.
It is clear that on $A'$, $x$ is $-\id$. Since on $A''$,
$x + \id$ is invertible, and $0 = x^3 + \id = (x+\id)(x^2 - x + \id)$,
we have that $x^2 = x - \id$ on $A''$.
Now, we claim that $\gamma$ restricts to each component.
Since $\gamma: A \to A^*$, restricting to $\im (x + \id)$
means that $\gamma: \im (x + \id) \to \im (x + \id)^*$, and restricting
to $\Ker (x + \id)$ means that $\gamma: \Ker (x + \id) \to \Ker(x + \id)^*$.
The first follows from the fact that $\gamma(x+\id) = (x+\id)^*\gamma^*$,
and the second follows from $(\gamma - \gamma^*)(x + \id) = (x+\id)^*\gamma$.

We have shown that $A = A' \oplus A''$
where $x$ and $\gamma$ restrict to $A'$ and $A''$, $x$ is $-\id$ on $A'$,
and $x^2 = x - \id$ on $A''$. Denote by $n$ the unique natural number
so that $q^n A'' = 0$, but $q^{n-1} A'' \neq 0$.
 
We will show that $A''$ decomposes as a direct sum $\bigoplus_i{(C_i \oplus C_i)}$
which respects $\gamma$,
by strong induction on $|A''|$.
The base case, where $|A''| = 1$, is trivial.

In order to do the inductive step, first
suppose that $q^{n-1}\gamma(g,g) = 0$ for any $g$ such that there does not exist 
a $g'$ where $g = q g'$. Since any element of $A''$ is a multiple of some such $g$,
we would have that $q^{n-1}\gamma(g,g) = 0$ for any $g \in A''$.
From this, we would have that $q^{n-1} \gamma^* = -q^{n-1} \gamma$;
therefore, by the definition of $x$, we would have that
$\gamma^* = \gamma x$, we would have
$q^{n-1}x = -q^{n-1}\id$. Because $x^2 = x - \id$,
$q^{n-1}\id = -(q^{n-1}x) = x(-q^{n-1}\id) = q^{n-1}x^2 = q^{n-1}x - q^{n-1}\id
= -2q^{n-1}\id$. Therefore, $3q^{n-1}\id = 0$.
Since $q  \neq 3$, we would have $q^{n-1}\id = 0$, a contradiction.

Thus, we have that there is some $g$ such that there does not exist
a $g'$ with $g = q g'$ and such that $q^{n-1}\gamma(g,g) \neq 0$.
Write $B = \Ker \gamma g \cap \Ker \gamma^* g$. We claim that
$A'' = B \oplus \langle g \rangle \oplus \langle xg \rangle$.
To verify this, it suffices to show that the map
$(a,b) \to (\gamma(g,ag + bxg), \gamma^*(g,ag + bxg))$ which maps
$\ZZ/q^n\ZZ \times \ZZ/q^n\ZZ \to (\cup_{g \in A^*}{\im g})^2$
is an isomorphism. This is clear from the explicit computation that
$(\gamma(g,ag + bxg), \gamma^*(g,ag + bxg)) = ((a + b)s, as)$,
where $s = \gamma(g,g)$ is a generator of $\cup_{g \in A^*}{\im g}$.
Since $\Ker \gamma xg = \Ker \gamma^* g$,
and $\Ker \gamma^* xg = \Ker (\gamma^* - \gamma) g$, it is clear
that $B$ is orthogonal to $\langle g \rangle \oplus \langle xg \rangle$.
Applying the inductive hypothesis to $B$ completes the proof.

The proof that $A'$ decomposes as a direct sum $\bigoplus_i{(C_i \oplus C_i)}$
which respects $\gamma$, under the assumption that $\gamma$ is skew-symmetric,
is the standard proof that every skew-form has a symplectic basis
over a vector space, where instead of splitting off $\langle v, v' \rangle$ so that
$\gamma(v, v') \neq 0$, we split off $\langle g, g' \rangle$ such that 
the order of the cyclic subgroup generated by $g$ is $q^n$, and
so that $q^{n-1}\gamma(g,g') \neq 0$.
\end{proof}

\section{Proof of Theorem \ref{thmR3A}}\label{pfthmR3A}


By Lemma \ref{VectA}, the categorifications are in one to one
correspondence with an element of $\CoH^3(\ZZ/3\ZZ, \CC^*) \simeq
\ZZ/3\ZZ$ together with a map $\gamma: A \to A^*$ satisfying
$(\gamma^{-1}\gamma^*)^3 = -\id$.  In order to classify such forms
up to equivalence, it suffices to classify such forms on the 
$q$-parts of $A$ for each prime $q$.  By our Lemmas \ref{uniqueskew},
\ref{uniquegamma}, and \ref{q-gp}, there are $\prod{(a_i/2 + 1)}$
choices for $\gamma$, as on each $C_i \oplus C_i$, there are two
choices, depending on whether or not $\gamma$ is skew-symmetric.
Since there are three choices for the element of $\CoH^3(\ZZ/3\ZZ,
\CC^*) \simeq \ZZ/3\ZZ$, the statement of this corollary follows,
provided that we can show that $\Aut(\ZZ/3\ZZ)$ acts trivially on
$\CoH^3(\ZZ/3\ZZ, \CC^*)$ and our solutions for $\gamma$.  To see
that it acts trivially on the cohomology group, recall that
$\CoH^3(\ZZ/3\ZZ, \CC^*) = (\ZZ/3\ZZ)^{\otimes(-2)}$.  
To see that it acts trivially on our solutions, 
observe that $\gamma$ being skew-symmetric is the same
as $\gamma^*$ being skew-symmetric.  As $\gamma$ is determined by on how
many of each type of $C_i \oplus C_i$ it is skew-symmetric, $\gamma$ and
$\gamma^*$ are equivalent.  Thus, the statement of the theorem follows.


\end{document}